 \DeclareMathAlphabet{\mathpzc}{OT1}{pzc}{m}{it}
 \newtheorem{theorem}{Theorem}[section]
 \newtheorem{lemma}[theorem]{Lemma}
 \newtheorem{proposition}[theorem]{Proposition}
 \newtheorem{corollary}[theorem]{Corollary}
 \newtheorem{definition}[theorem]{Definition}
  \theoremstyle{definition}
 \newtheorem{remark}[theorem]{Remark}
\newtheorem*{conventions}{Conventions}
\newtheorem*{acknowledgements}{Acknowledgements}
\renewenvironment{proof}{\noindent{\it
Proof.}}{\bgroup\hspace{\stretch{1}}$\square$\egroup\medskip\par}
\newcommand{\Rep}{\textrm{Rep}}
\newcommand{\D}{{\bf \Delta}}
\newcommand{\im}{\textrm{im}}
\newcommand{\RRep}{\mathcal{R}\textrm{ep}^{\infty}}
\newcommand{\URRep}{\mathcal{\hat{R}}\textrm{ep}^{\infty}}
\newcommand{\id}{\mathrm{id}}
\newcommand{\End}{\textrm{End}}
\newcommand{\Hom}{\textrm{Hom}}
\newcommand{\RHom}{\underline{\mathrm{Hom}}}
\newcommand{\front}{P}
\newcommand{\back}{Q}
\newcommand{\A}{\mathsf{A}}
\newcommand{\C}{\mathcal{C}}
\begin{document}

\vspace{15cm}
 \title{Reidemeister torsion for flat superconnections}
\author{Camilo Arias Abad\footnote{Institut f\"ur Mathematik, Universit\"at Z\"urich,
camilo.arias.abad@math.uzh.ch. Partially supported
by SNF Grant 20-113439 and the Forschungskredit of the Universit\"at Z\"urich. } \hspace{0cm} and
Florian Sch\"atz\footnote{Center for Mathematical Analysis, Geometry and Dynamical Systems, IST Lisbon, fschaetz@math.ist.utl.pt. 
Partially supported by the FCT through program POCI 2010/FEDER, by post-doctoral grant
SFRH/BPD/69197/2010 and by project PTDC/MAT/098936/2008.}}

\maketitle
\begin{abstract} 
We use higher parallel transport -- more precisely, the integration $\mathsf{A}_\infty$-functor constructed in \cite{BS,AS2} -- to define Reidemeister torsion for flat superconnections. 
We hope that the combinatorial
Reidemeister torsion coincides with the analytic torsion defined by Mathai and Wu \cite{MathaiWu2}, thus permitting
for a generalization of the Cheeger-M\"uller Theorem.
\end{abstract}

\tableofcontents

\section{Introduction}\label{s:intro}

The main goal of this paper is to explain how higher parallel transport can be used to define Reidemeister torsion of 
flat superconnections. The classical Reidemeister torsion \cite{De-Rham, Franz, Reidemeister} is an invariant
of a flat vector bundle $E$ on a closed odd dimensional manifold $M$. It was first introduced by Reidemeister in order to distinguish lens spaces which are homotopy equivalent but
not homeomorphic. This invariant -- which is a norm $\tau_R$ on the determinant line of the cohomology $H(M,E)$ -- is defined by choosing a triangulation of $M$.
The corresponding cellular complex is finite dimensional and computes $H(M,E)$.
The norm $\tau_R$ is then constructed with the help of the cellular basis and the fact that
the determinant line of a finite dimensional complex is naturally isomorphic to the determinant
line of its cohomology.

We prove that given a flat superconnection on an closed, orientable, odd dimensional manifold $M$,  the $\A_\infty$-version of parallel transport constructed in \cite{BS,AS2}, together with the choice of a triangulation, produce  a finite dimensional complex computing the cohomology $H(M,E)$. 
Using this complex and Poincare duality, one can induce a norm  -- the Reidemeister torsion $\tau_R$ -- on the determinant line
$\det H(M,E)$, thus extending the construction
\begin{align*}
(E,\nabla) \mapsto \tau_R
\end{align*}
to the $\mathbb{Z}_2$-graded setting.

The notion of higher parallel transport that we use can be formulated as follows. Ordinary parallel transport for flat vector bundles yields an integration functor:
\[\int: \mathsf{Flat}(M)\rightarrow \Rep(\Pi_1(M)),\]
from the category of flat vector bundles on $M$ to the category of representations of the fundamental groupoid $\Pi_1(M)$ of $M$.
Based on the work of Gugenheim \cite{Gugenheim} and Igusa \cite{I}, Block-Smith \cite{BS} and Arias-Sch\"atz \cite{AS2} showed that this integration functor extends to an $\A_\infty$-functor which makes the following diagram commute:

\begin{align*}
\xymatrix{
\mathsf{Flat}(M) \ar[r]^\int \ar[d]&\Rep(\Pi_1(M))   \ar[d] \\
\URRep(TM)\ar[r]^\int& \URRep(\pi_{\infty}(M)).
}
\end{align*}

Here, $\URRep(TM)$ denotes the $dg$-category of flat superconnections on $M$, $\URRep(\pi_{\infty}(M))$ denotes the category of representations up to homotopy of the simplicial set $\pi_{\infty}(M)$ of smooth singular chains on $M$; the vertical arrows are natural
inclusions and the horizontal arrow at the bottom is the $\A_\infty$-functor mentioned above. This construction gives higher holonomies associated not only to one dimensional simplices, but to simplices of all dimensions. It provides a combinatorial (singular) way to compute
the cohomology of a flat superconnection on $E$. 

By choosing a smooth triangulation $(K,\phi)$ of $M$, one obtains a subsimplicial set $M^K \hookrightarrow  \pi_{\infty} (M)$ of the infinity groupoid of $M$. By restricting the representation $\int(E)$ to $M^K$, one obtains a finite dimensional
complex that computes the cohomology of $H(M,E)$. This complex is used to define the Reidemeister torsion $\tau_R$ of the flat superconnection on $E$.

We should mention that, in a certain sense, the Reidemeister torsion of a flat superconnection on $E$ is not a new invariant:
any flat superconnection $D$ on $E$ induces a flat connection on the cohomology bundle $H_\partial(E)$. Moreover, filtering the complex
$\Omega(M,E)$ in a natural way yields a spectral sequence $\mathcal{E}_r$ converging to $H(M,E)$ and whose second page is
\[\mathcal{E}^{p,\overline{q}}_2\cong H(M,H^{\overline{q}}_\partial(E)).\]
This induces a canonical isomorphism at the level of determinant lines

\[\det H(M,E)\cong \det H(M,H_\partial(E)).\]

This turns out to be an isomorphism of metric vector spaces. That is, the torsion of the flat superconnection $D$ is mapped to the usual Reidemeister torsion of the flat bundle $H_\partial(E)$
under this isomorphism.

Mathai and Wu \cite{MathaiWu2} have studied the analytic torsion of flat superconnections. It is natural to hope that a version of the Cheeger-M\"uller theorem holds in this context, namely, that the analytic torsion defined by Mathai-Wu coincides with the Reidemeister torsion
defined in the present paper. Since, as mentioned before, the Reidemeister torsion of flat superconnection reduces 
in a certain sense to the ordinary Reidemeister torsion, this question is equivalent to asking whether the same 
reduction holds true for the analytic torsion of Mathai-Wu. Some results in that direction can be found in \cite{MathaiWu},
see in particular Proposition 5.1. there.
\\

The paper is organized as follows. In Section \S \ref{preliminaries} we review the definitions of flat superconnections,
representations up to homotopy of simplicial sets, and the basic properties of the integration $\A_\infty$-functor constructed in \cite{BS,AS2}.
We also show that, by choosing a triangulation of the manifold $M$, one can find a finite dimensional complex computing the cohomology
of a flat superconnection $E$ on $M$. In Section \S \ref{torsion} we explain how to use the higher version of parallel transport to define
the Reidemeister torsion of a flat superconnection on a closed, orientable, odd dimensional manifold $M$. The main result of the paper is Theorem \ref{theorem:invariance}, which states the independence
of Reidemeister torsion of all auxiliary choices. We prove -- Corollary \ref{equal} -- that under the natural identification of determinant lines, the Reidemeister torsion of a flat superconnection on
$E$ coincides with the ordinary Reidemeister torsion of the flat connections induced on the cohomology bundle $H_\partial(E)$. It is also proven that the Reidemeister torsion is invariant under quasi-isomorphisms of flat superconnections, see Proposition \ref{quasi-iso}.
In the Appendix we collect some general facts regarding homological algebra of determinant lines and
prove some duality results for spectral sequences.

\begin{acknowledgements}
We would like to thank Maxim Braverman, Alberto Cattaneo, Calin Lazaroiu, Pavel Mn\"{e}v and James Stasheff for useful conversations.
We also thank the University of Zurich and the Instituto Superior Tecnico in Lisbon for their hospitality.
\end{acknowledgements}

\begin{conventions}
All complexes we consider are complexes of vector spaces.
Moreover, all isomorphisms between linear objects (complexes, graded vector spaces, vector spaces) are considered
up to sign.
\end{conventions}

\section{Preliminaries}\label{preliminaries}

We discuss results regarding higher notions of parallel transport for flat superconnections. The 
constructions we describe here are based on \cite{AS2,BS,I}. While these papers are written in the setting 
of $\mathbb{Z}$-graded superconnections, we will be interested in the $\mathbb{Z}_2$-graded case. Most of the results
apply in this setting without major modifications. We will indicate the changes that are necessary in the $\mathbb{Z}_2$-graded case.

\subsection{Flat superconnections }\label{section:flat_superconnections}

Let $E$ be a $\mathbb{Z}_2$-graded vector bundle over $M$, i.e. $E = E^{\overline{0}}\oplus E^{\overline{1}}$.
The space $\Omega(M,E)=\Gamma(\wedge T^*M \otimes E)$ of differential forms with values in $E$ is a $\mathbb{Z}_2$-graded vector space with
components
\begin{align*}
\Omega(M,E)^{\overline{0}} := \bigoplus_{\overline{k}+\overline{l}=\overline{0}}\Omega^{k}(M,E^{\overline{l}}) \qquad \textrm{and} \qquad
\Omega(M,E)^{\overline{1}} := \bigoplus_{\overline{k}+\overline{l}=\overline{1}}\Omega^{k}(M,E^{\overline{l}}).
\end{align*}
The vector space $\Omega(M,E)$ is a $\mathbb{Z}_2$-graded module over the algebra $\Omega(M)$.

\begin{definition}
A superconnection on $E$ is a linear operator
\begin{align*}
 D: \Omega(M,E) \to \Omega(M,E)
\end{align*}
of odd degree which satisfies the Leibniz rule
\begin{align*}
D(\alpha \wedge \omega) = d\alpha \wedge \omega + (-1)^{|\alpha|} \alpha \wedge D(\omega)
\end{align*}
for all homogeneous $\alpha \in \Omega(M)$ and $\omega \in \Omega(M,E)$.
A superconnection $D$ is flat if $D^2=0$.
\end{definition}

\begin{remark}
Any superconnection $D$ on $E$ corresponds to a family of operators
\begin{align*}
 (\partial, \nabla, \omega_2, \omega_3, \dots),
\end{align*}
where $\partial$ is a fiberwise linear operator on $E$, $\nabla=(\nabla^{\overline{0}},\nabla^{\overline{1}})$
is a pair of connections on $E^{\overline{0}}$ and $E^{\overline{1}}$, and $\omega_k$
are differential forms of degree $k$ with values in $\End^{\overline{1-k}}(E)$, where $\End(E)$ denotes the $\mathbb{Z}_2$-graded algebra of
endomorphisms of $E$.

The flatness condition on $D$ translates into a family of quadratic relations of the form

\[ \partial \circ \partial = 0, \quad
 [\partial, \nabla] = 0,\quad [\partial, \omega_2] + R_{\nabla} =0, \qquad \textrm{and} \]
\[ [\partial, \omega_k] + [d_{\nabla},\omega_{k-1}] + \sum_{r+s=k, r,s\ge 2} \omega_{r} \wedge \omega_{s} = 0, \quad \textrm{for } k\ge 3. \]

\end{remark}

\begin{definition}
Let $E$ be a $\mathbb{Z}_2$-graded vector bundle, equipped with a flat superconnection $D$.
The cohomology $H(M,E)$ of $M$ with values in $E$ is the cohomology of the complex $(\Omega(M,E),D)$.
\end{definition}

\begin{conventions}
In the following, {\it $dg$-category} will always refer to a {\it $\mathbb{Z}_2$-graded $dg$-category}.
\end{conventions}

\begin{remark}
The flat superconnections on $M$ can be organized into a $dg$-category.
A morphism of parity $\overline{p}$ between two flat superconnections $E$ and $E'$ on $M$  is a parity $\overline{p}$ morphism of
$\Omega(M)$-modules
\[\phi: \Omega(M,E)\rightarrow \Omega(M,E').\]
Observe that we do not require $\phi$ to be a chain map.
The space of morphisms 
\[\underline{\Hom}(E,E')= \underline{\Hom}^{\overline{0}}(E,E') \oplus \underline{\Hom}^{\overline{1}}(E,E'),\]
is a $\mathbb{Z}_2$-graded complex with differential
\[\Delta(\phi):=D' \circ \phi -(-1)^{|\phi|}\phi \circ D. \]
We will denote the resulting $dg$-category by $\URRep(T(M))$.
\end{remark}
\begin{remark}
Since a flat superconnection is an elliptic differential operator, the cohomology $H(M,E)$ is finite dimensional for $M$ a closed manifold.
\end{remark}

\begin{definition}
Let $E$ be a $\mathbb{Z}_2$-graded vector bundle over $M$, equipped with a flat superconnection $D$.
The dual of $D$ is the flat superconnection $D^*$ on $E^*$ determined by the condition:
\begin{align*}
 d<\alpha,\omega> = <D^*\alpha, \omega> + (-1)^{|\alpha|} <\alpha, D\omega>
\end{align*}
for all $\alpha \in \Omega(M,E^*)$ and $\omega \in \Omega(M,E)$.
\end{definition}

\begin{remark}\label{remark:PD_1}
Under the assumption that $M$ is closed and orientable, Poincar\'e duality holds true for flat superconnetions.
That is, the natural pairing
\begin{align*}
\Omega(M,E^*) \otimes \Omega(M,E) \to \mathbb{R}, \qquad (\alpha,\omega) \mapsto \int_{M} <\alpha,\omega>
\end{align*}
induces a perfect pairing
\begin{align*}
H(M,E^*) \otimes H(M,E) \to \mathbb{R},
\end{align*}
as can be verified using Hodge-theory. This can be seen as a special instance of the duality
established in \cite{Block}.

The parity of the corresponding isomorphism
\begin{align*}
\mathcal{D}: H(M,E^*) \to (H(M,E))^*
\end{align*}
coincides with the dimension of $M$. In particular, if $M$ is odd dimensional, we obtain
a parity-preserving isomorphism of $\mathbb{Z}_2$-graded vector spaces
\begin{align*}
\mathcal{D}: H(M,E^*) \to \Pi H(M,E)^*.
\end{align*}
\end{remark}

\subsection{$\mathbb{Z}_2$-graded representation up to homotopy of simplicial sets}

Let $X_{\bullet}$ be a simplicial set with face and degeneracy maps denoted by
\[d_i: X_k\rightarrow X_{k-1} \quad \text{and} \quad s_i: X_k \rightarrow X_{k+1},\] 
respectively.  We will use the notation
\begin{eqnarray*}
\front_i&:=&(d_0)^{k-i}:X_k \rightarrow X_i,\\
\back_i&:=&d_{i+1} \circ \dots \circ d_{k} :X_k \rightarrow X_i,
\end{eqnarray*}
for the maps that send a simplex to its $i$-th back and front face.
The $i$-th vertex of a simplex $\sigma \in X_k$ 
will be denoted $v_i(\sigma)$, or simply $v_i$, when no confusion can arise.
In terms of the above operations, one can write
\[v_i=(\front_0\circ \back_{i})(\sigma). \]

Suppose that $E$ is a $\mathbb{Z}_2$-graded vector bundle over $X_0$, i.e. that there is a $\mathbb{Z}_2$-graded vector space
$E_x$ for each $x \in X_0$.
A cochain $F$ of degree $k$ on $X_{\bullet}$ with values in $E$ is a map:
\begin{align*}
F: X_k \to E,
\end{align*}
such that $F_k(\sigma)\in E_{v_0(\sigma)}$. We denote by $\hat{C}^k(X, E)$ the vector space of normalized cochains, i.e. those cochains which vanish on degenerate simplices. The spaces of $E$-valued cochains is a $\mathbb{Z}_2$-graded vector space:
\[\hat{C}(X,E):=\hat{C}(X,E)^{\overline{0}} \oplus \hat{C}(X,E)^{\overline{1}}, \]
where
\[  \hat{C}(X,E)^{\overline{0}} :=\prod_{\overline{k}+\overline{i}=\overline{0}} \hat{C}^k(X,E^{\overline{i}}) \quad \text{ and } \quad
 \hat{C}(X,E)^{\overline{1}} :=\prod_{\overline{k}+\overline{i}=\overline{1}} \hat{C}^k(X,E^{\overline{i}}). \]

In case the vector bundle is the trivial line bundle $\mathbb{R}$ we will write $\hat{C}(X)$ instead of $\hat{C}(X,\mathbb{R})$.
The space $\hat{C}(X)$ is naturally a $\mathbb{Z}_2$-graded $dg$-algebra with the cup product and the usual simplicial differential defined by
\[\delta(\eta)(\sigma):=\sum_{i=0}^k(-1)^id^*_i(\eta)(\sigma),\]
for $\eta \in \hat{C}^{k-1}(X)$. Given any $\mathbb{Z}_2$-graded vector bundle $E$ over $X_0$, the cup product gives the space $\hat{C}(X,E)$
the structure of a right graded module over the algebra $\hat{C}(X)$.

\begin{definition}
A unital $\mathbb{Z}_2$-graded representation up to homotopy of $X_{\bullet}$ consists of the following data:
\begin{enumerate}
\item A finite rank $\mathbb{Z}_2$-graded vector bundle $E$ over  $X_0$.
\item A linear map of odd parity $D: \hat{C}(X,E)\rightarrow \hat{C}(X,E)$  which is a derivation with respect to the $\hat{C}(X)$-module structure and squares to zero.
\end{enumerate}
The cohomology of $X_\bullet$ with values in $E$, denoted $H(X,E)$, is the cohomology of the complex $(\hat{C}(X,E),D)$.
\end{definition}

\begin{conventions}
In the following, {\it representations up to homotopy} will always refer to {unital \it $\mathbb{Z}_2$-graded representations up to homotopy}.
\end{conventions}

\begin{remark}
The representations up to homotopy of $X_{\bullet}$ form a  $dg$-category. 
Let $E, E'$ be two representations up to homotopy of $X_{\bullet}$.
A parity $\overline{p}$ morphism $\phi \in \RHom^{\overline{p}}(E,E')$ is a parity $\overline{p}$ map of $\hat{C}(X)$-modules
$\phi: \hat{C}(X,E)\rightarrow \hat{C}(X,E').$
The space of morphisms is naturally a $\mathbb{Z}_2$-graded vector space
\[\RHom(E,E')=\RHom^{\overline{0}}(E,E')\oplus \RHom^{\overline{1}}(E,E')\]

with differential
\begin{eqnarray*}
\D:\RHom^{\overline{p}}(E,E')&\rightarrow& \RHom^{\overline{p+1}}(E,E')\\
\phi &\mapsto & D' \circ \phi - (-1)^{|\phi|} \phi \circ D.
\end{eqnarray*}
 
We denote the resulting $dg$-category by $\URRep(X_\bullet )$. 
\end{remark}

\begin{remark}
The category  $\URRep(X_\bullet )$ is functorial with respect to maps of simplicial sets. Namely, if $f: X_\bullet \rightarrow Y_\bullet$ is a morphism of simplicial sets then there is a pull-back $dg$-functor:
\begin{eqnarray*}
f^*:\URRep(Y_\bullet ) & \rightarrow & \URRep(X_\bullet )\\
E &\mapsto &f^*(E)
\end{eqnarray*}
In particular, this $dg$-functor induces a map in cohomology:
\[f^*:H(Y,E)\rightarrow H(X,f^*E) .  \]
\end{remark}

\subsection{Integration}

We will now discuss the notion of parallel transport for flat superconnections.
It generalizes the fact that a flat connection
on a vector bundle corresponds to a representation of the fundamental groupoid $\Pi_1(M)$ of $M$.
We denote the simplicial set of smooth simplices in $M$ by $ \pi_{\infty}(M)$, that is:
\[\pi_{\infty}(M)_k:=\mathsf{Maps}(\Delta_k,M).\]
One of the central results of \cite{AS2,BS}  is the following:

\begin{theorem}\label{theorem:integration_everything}
There is an $\mathsf{A}_{\infty}$-functor 
\begin{align*}
\int: \RRep(TM) \to \URRep(\pi_{\infty}(M))
\end{align*}
between the $dg$-category of flat superconnections on $M$ and the $dg$-category of representations up to homotopy of $\pi_{\infty}(M)$.
Moreover, the map induced in cohomology is an isomorphism.
\end{theorem}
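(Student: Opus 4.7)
The plan is to adapt the constructions of \cite{AS2,BS} from the $\mathbb{Z}$-graded to the $\mathbb{Z}_2$-graded setting; since the proofs rest on Stokes' theorem and bookkeeping of signs rather than on the fineness of the grading, the main work is verifying that everything descends modulo $2$.

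First I would define the functor at the level of objects. Given a flat superconnection $D=(\partial,\nabla,\omega_2,\omega_3,\dots)$ on a $\mathbb{Z}_2$-graded vector bundle $E\to M$, one assigns to each smooth simplex $\sigma:\Delta_k\to M$ an element of $\Hom(E_{v_0(\sigma)},E_{v_k(\sigma)})$, built as an iterated integral in the spirit of Chen and Igusa: parallel transport along the $0$--$k$ edge of $\sigma$ gets corrected by iterated integrals over $\Delta_k$ involving the pullbacks $\sigma^{*}\omega_j$ and parallel transports along subsimplices. Packaging these operators into a map $\hat{C}(\pi_{\infty}(M),E)\to\hat{C}(\pi_{\infty}(M),E)$ of odd parity and applying Stokes' theorem turns the flatness relations for $D$ recorded earlier in the excerpt into the equation $\hat{D}^{2}=0$, producing a representation up to homotopy of $\pi_{\infty}(M)$.

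Next I would extend the assignment to an $\A_\infty$-functor. For morphisms $\phi_{i}:E_{i-1}\to E_{i}$ of flat superconnections, decomposed according to form-degree, I would define higher multilinear operations
\begin{equation*}
\int_{n}:\underline{\Hom}(E_{0},E_{1})\otimes\cdots\otimes\underline{\Hom}(E_{n-1},E_{n})\to\RHom(\smallint E_{0},\smallint E_{n}),
\end{equation*}
again by iterated integration, inserting the forms $\phi_{i}$ into the iterated integral expression at intermediate positions. The $\A_\infty$-functor identities between these operations and the differentials $\D$ on both sides reduce, via Stokes' theorem and the Leibniz rule, to the flatness of the $D_{i}$ and to the defining identity $\D(\phi)=D'\circ\phi-(-1)^{|\phi|}\phi\circ D$.

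For the claim that $\int$ induces an isomorphism in cohomology, I would compare spectral sequences. Both complexes carry natural filtrations: the de Rham-form degree on $(\Omega(M,E),D)$ and the simplicial degree on $(\hat{C}(\pi_{\infty}(M),\int E),\hat{D})$. With respect to these, $\int$ is a filtered map; on the $E_{1}$-pages it reduces, on each $\mathbb{Z}_2$-graded piece separately, to the classical statement that Chen/Gugenheim integration identifies the de Rham cohomology of $M$ with coefficients in the flat bundle $(E^{\overline{q}},\nabla^{\overline{q}})$ with the singular cohomology of $M$ with the same coefficients. A standard comparison of (convergent) spectral sequences then yields the isomorphism on total cohomology.

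The main obstacle is the bookkeeping of signs. The original references work with a $\mathbb{Z}$-grading in which form-degree and internal degree do not interact; here they are identified modulo $2$, so one must check that every Koszul sign arising in the definition of the iterated integrals, in the $\A_\infty$-identities, and in Stokes' theorem depends only on the parity of the relevant degrees. Once this is confirmed, the constructions and arguments of \cite{AS2,BS} carry over verbatim.
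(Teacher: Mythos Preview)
Your overall strategy matches the paper's treatment: the theorem is not proved in the paper but cited from \cite{AS2,BS}, and the accompanying remark simply explains why the $\mathbb{Z}$-graded construction there descends to the $\mathbb{Z}_2$-graded setting and why the cohomology statement follows from de Rham's theorem via a spectral sequence. Your outline of the iterated-integral construction and the filtered comparison is in the same spirit.

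Two points of divergence are worth noting. First, the paper frames the construction more conceptually: rather than building holonomies simplex-by-simplex, it observes that a flat superconnection on a trivial $E$ is a Maurer--Cartan element of $\Omega(M,\End(E))$, that a representation up to homotopy is a Maurer--Cartan element of $\hat C(\pi_\infty(M),\End(E))$, and that tensoring Gugenheim's $\A_\infty$ de Rham quasi-isomorphism with $\End(E)$ transports one to the other; gauge-invariance of Gugenheim's map then handles non-trivial bundles. Your direct iterated-integral description is equivalent but leaves the passage to non-trivial bundles implicit, whereas the paper singles out gauge-invariance as the mechanism.

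Second, your description of the $E_1$-page is not quite right. The connection $\nabla^{\overline q}$ on $E^{\overline q}$ is \emph{not} flat in general --- the flatness equations only give $R_\nabla = -[\partial,\omega_2]$. Filtering by form degree, the $d_0$-differential is the fiberwise $\partial$, so the first page is $\Omega^{\bullet}(M,H_\partial^{\overline q}(E))$ with the induced flat connection $[\nabla]$, and the comparison on $E_2$ is de Rham's theorem for the flat bundle $H_\partial(E)$, not for $(E^{\overline q},\nabla^{\overline q})$. With that correction your spectral-sequence argument goes through and agrees with what the paper sketches.
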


\begin{remark}
\hspace{0cm}
\begin{itemize}
\item[(1)]
Since the constructions of \cite{AS2,BS} deal with the $\mathbb{Z}$-graded case, some comments are in order.
 The integration $\mathsf{A}_\infty$-functor is constructed using the $\mathsf{A}_\infty$ version of de Rham's 
theorem which was constructed by Gugenheim \cite{Gugenheim} using Chen's iterated integrals \cite{C}. The general 
structure of the construction is as follows. Suppose that $E$ is a $\mathbb{Z}_2$-graded {\em trivial} vector bundle over
$E$. Then, a flat superconnection on $E$ corresponds to a Maurer-Cartan element in $\Omega(M,\End(E))$ while a representation 
up to homotopy of $\pi_{\infty}(M)$ on $E$ corresponds to a Maurer-Cartan element in $\hat{C}(\pi_{\infty}(M),\End(M))$. 
By tensoring Gugenheim's $\mathsf{A}_\infty$-morphism with $\End(E)$ one obtains an $\mathsf{A}_\infty$-morphism between these two algebras. 
Therefore, there is  a way to produce Maurer-Cartan elements in $\hat{C}(\pi_{\infty}(M),\End(M))$ out of Maurer-Cartan elements in $\Omega(M,\End(E))$. 
This procedure works in the $\mathbb{Z}_2$-graded case as well as for $\mathbb{Z}$-grading.
Gauge-invariance of Gugenheim's $\mathsf{A}_\infty$-morphism allows to extend this construction to non-trivial bundles, see \cite{AS2} for
the details. 

\item[(2)]
Given a flat superconnection on $M$, one can define the cohomology associated to it in terms of the $dg$-category $ \RRep(TM)$
as follows:
\begin{equation*}
H(TM,E) := H(\RHom(\mathbb{R},E)).
\end{equation*}
Similarly, given a representation up to homotopy  $E$ of $\pi_{\infty}(M)$, one can define the cohomology associated to it as
\begin{equation*}
H(\pi_{\infty}(M),E) := H(\RHom(\mathbb{R},E)).
\end{equation*}
Since the $\A_\infty$ functor
$\int$ sends the trivial representation to the trivial representation it induces a map

\begin{align*}
\int: H(TM,E)\to H(\pi_{\infty}(M),\int(E)).
\end{align*}
The usual de Rham theorem together with a spectral sequence argument imply that this map is an isomorphism.
\end{itemize}
\end{remark}

\subsection{Simplicial cochains}

So far we have seen that the cohomology of a flat superconnection can be computed using singular cohomology.
We will now explain the corresponding cellular complex.
Suppose that $(K,\phi)$ is a smooth triangulation of the closed manifold $M$, i.e. $\phi: |K| \to M$ is a homeomorphism
between the geometric realization of the finite simplicial complex $K$ and $M$,
such that the restriction to every closed simplex of $K$ is smooth.
The existence of such triangulations was proved by Whitehead, see \cite{Whitehead}.
The choice of a total order in the vertices of $K$  gives rise to a subsimplicial set
$\iota:M^K \hookrightarrow \pi_{\infty}M$.

\begin{lemma}\label{singular-simplicial}
Let $M$ be a closed manifold, $(K,\phi)$ a smooth triangulation of
$M$ together with a total ordering on its vertices. Then the functor
\[\iota^*:  \URRep(\pi_{\infty}(M)) \rightarrow \URRep(M^K),\]
induces an isomorphism in cohomology
\[\iota^*: H(\pi_{\infty}(M),E)\rightarrow H(M^K, \iota^*(E))\]
for any representation up to homotopy $E \in \URRep(\pi_{\infty}(M))$.
\end{lemma}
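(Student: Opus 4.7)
The plan is to compare the two sides using a spectral sequence argument. On any simplicial set $X$, the cochain complex $\hat{C}(X,E)$ associated to a representation up to homotopy $E$ carries a natural decreasing filtration by cochain degree,
\[
F^p\hat{C}(X,E) := \prod_{k \geq p}\hat{C}^k(X,E).
\]
Since $E$ sits over $X_0$, the differential $D$ of a representation up to homotopy decomposes as $D = D_0 + D_1 + D_2 + \cdots$, where $D_k$ raises cochain degree by $k$ and $D_0$ acts as the fiberwise operator $\partial$. In particular, $D$ preserves $F^\bullet$ on both $\hat{C}(\pi_\infty(M),E)$ and $\hat{C}(M^K, \iota^*E)$, and the restriction map $\iota^*$ is compatible with these filtrations, inducing a morphism between the two associated spectral sequences.

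A direct inspection identifies the $E_0$-differential with the fiberwise $\partial$, so that
\[
E_1^p \cong \hat{C}^p(X, H_\partial(E)),
\]
where $H_\partial(E)$ denotes the $\mathbb{Z}_2$-graded cohomology bundle over $X_0$. The $d_1$-differential is induced by $D_1$, which descends to a flat parallel transport on $H_\partial(E)$; consequently, each $E_2$-page may be identified with the cohomology of $X$ with values in the local system $H_\partial(E)$. Under this identification, the map induced by $\iota^*$ is the classical pullback from singular cohomology of $M$ with coefficients in the local system $H_\partial(E)$ to simplicial cohomology of $K$ with coefficients in its restriction. Since $\phi\colon |K| \to M$ is a homeomorphism, the classical comparison between singular and simplicial cohomology with local coefficients guarantees that this pullback is an isomorphism at the $E_2$-level.

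The final step is to lift this isomorphism from the $E_2$-page to the abutments. On the $M^K$-side the filtration is bounded, since normalized cochains of $M^K$ vanish above the top dimension of non-degenerate simplices of $K$, so convergence and comparison are standard. On the $\pi_\infty(M)$-side the filtration is unbounded above, but complete and Hausdorff by construction of $\hat{C}(\pi_\infty(M),E)$ as a product over cochain degrees; one invokes a complete convergence theorem for conditionally convergent spectral sequences (as in Boardman) to promote the $E_2$-isomorphism to the abutment. This convergence step is the main technical obstacle I anticipate, but it is of a purely homological nature once the $E_2$-identification is in hand.
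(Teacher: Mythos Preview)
Your proposal is correct and follows essentially the same approach as the paper: filter both cochain complexes by cochain degree, identify the $E_2$-pages with cohomology in the local system $H_\partial(E)$, and invoke the classical singular-versus-simplicial comparison for local coefficients. If anything, you are more careful than the paper about convergence on the $\pi_\infty(M)$-side, which the paper leaves implicit.
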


\begin{proof}
There are natural decreasing filtrations 
\[F_p(\hat{C}(\pi_{\infty}(M),E)):=\prod_{k\geq p} \hat{C}^k(X,E),\]
and 
\[\tilde{F}_p(\hat{C}(M^K,\iota^* E)):=\prod_{ k\geq p} \hat{C}^k(X,\iota^*E),\]
which induce spectral sequences $\mathcal{E}^{p,\overline{q}}_r$ and $\tilde{\mathcal{E}}^{p,\overline{q}}_r$. The map $\iota^*$ respects filtrations
and therefore induces a map of spectral sequences

\[i^*: \mathcal{E}^{p,\overline{q}}_r\rightarrow \tilde{\mathcal{E}}^{p,\overline{q}}_r.\]
It suffices to prove that the map of spectral sequences is an isomorphism for $r=2$.
Since $E$ is a representation up to homotopy of $\pi_{\infty}(M)$, the cohomology bundle $H_{\partial}(E)$ 
is a local system over $M$ and moreover

\[  \mathcal{E}^{p,\overline{q}}_2\cong H^p(M, H_{\partial}^{\overline{q}}(E)) . \]
The result then reduces to the usual computation of the cohomology of a local system in terms of a triangulation. 
\end{proof}

By putting together Lemma \ref{singular-simplicial} and Theorem \ref{theorem:integration_everything} we obtain:

\begin{corollary}
Let $M$ be a closed manifold, $E$ a flat superconnection on $M$ and $(K,\phi)$ a smooth triangulation of $M$ together which a
total ordering of its vertices. Then there exists a canonical isomorphism
\[H(M,E)\cong H(M^K, \iota^* \int (E)).\]
\end{corollary}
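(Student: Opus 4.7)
The plan is simply to compose the two isomorphisms that have just been established; the corollary is their concatenation, applied to the representation up to homotopy arising from the given flat superconnection. First I would invoke Theorem \ref{theorem:integration_everything} applied to the flat superconnection $E \in \RRep(TM)$. The theorem asserts that the $\mathsf{A}_\infty$-functor $\int: \RRep(TM) \to \URRep(\pi_\infty(M))$ induces an isomorphism on cohomology, so that
\[H(M,E) \cong H(\pi_\infty(M), \textstyle\int(E)).\]
As noted in Remark (2) following the theorem, both sides can be intrinsically defined as $H(\RHom(\mathbb{R}, -))$ inside the respective $dg$-categories, and the induced map between these spaces is the claimed isomorphism.

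Next I would feed the representation up to homotopy $\int(E) \in \URRep(\pi_\infty(M))$ into Lemma \ref{singular-simplicial}. Since $(K,\phi)$ is a smooth triangulation of $M$ with a chosen total ordering of its vertices, we have the inclusion of simplicial sets $\iota : M^K \hookrightarrow \pi_\infty(M)$, and the lemma provides a restriction isomorphism
\[\iota^* : H(\pi_\infty(M), \textstyle\int(E)) \xrightarrow{\ \cong\ } H(M^K, \iota^*\textstyle\int(E)).\]
Composing these two maps yields the desired canonical isomorphism
\[H(M,E) \cong H(M^K, \iota^* \textstyle\int(E)).\]

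I do not anticipate any genuine obstacle in this step: the entire content of the corollary has been assembled in the preceding Theorem \ref{theorem:integration_everything} (the $\mathsf{A}_\infty$-de Rham theorem of Chen--Gugenheim adapted in \cite{AS2,BS}) and in Lemma \ref{singular-simplicial} (the spectral sequence comparing singular to simplicial cochains with local coefficients). Both isomorphisms are natural in their inputs, so the composition is canonical with respect to the chosen triangulation and ordering, as claimed.
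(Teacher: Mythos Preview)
Your proposal is correct and matches the paper's own argument exactly: the corollary is stated immediately after Lemma \ref{singular-simplicial} with the one-line justification ``By putting together Lemma \ref{singular-simplicial} and Theorem \ref{theorem:integration_everything} we obtain,'' which is precisely the composition you describe.
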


\begin{remark}
To simplify the notation, when no confusion arises we will denote the complex $\hat{C}(M^K,\iota^*\int(E))$ by $C(M^K,E)$.
Observe that $C(M^K, E)$ is finite-dimensional. In fact, there is a natural isomorphism
\begin{align*}
C(M^K,E) \cong \bigoplus_{\Delta \in K} \Pi^{\dim \Delta} E_{v_0(\Delta)},
\end{align*}
where $\Pi$ is the endofunctor on the category of $\mathbb{Z}_2$-graded vector spaces that reverses the parity.
\end{remark}

\section{Reidemeister Torsion}\label{torsion}
\subsection{Definition of combinatorial torsion}

Let $M$ be a closed, orientable, odd dimensional manifold and $E$ a $\mathbb{Z}_2$-graded vector bundle over $M$, equipped with a flat superconnection $D$.
We will define a norm $\tau_{R}$ -- the {\em Reidemeister torsion} -- on the determinant line $\det H(M,E)$ of the cohomology of $M$ with values in $E$.
We will follow Farber's approach \cite{Faber} to Reidemeister torsion. The construction given below extends the usual Reidemeister torsion of flat vector bundles.

In order to define $\tau_{R}$, we choose a smooth triangulation $(K,\phi)$ of $M$, together with a total ordering of the vertex set of $K$.
As we have seen before, this choice gives a subsimplicial set $M^K \hookrightarrow \pi_{\infty}M$.

\begin{definition}
Let $(K,\phi)$ be a smooth triangulation of $M$, together with a total ordering of its vertex set
and $E$ a $\mathbb{Z}_2$-graded vector bundle over $M$, equipped with a flat superconnection $D$.
The complex $\C_K(M,E)$, associated to these data is 
\[\C_K(M,E):=C(M^K, E)\oplus C(M^K, E^*).\]
\end{definition}

\begin{remark}
Using the standard identities for determinant lines -- see Appendix \ref{appendix1}-- we obtain
\begin{eqnarray*}
\det\mathcal{C}_K(M,E) &\cong& \det C(M^K,E) \otimes \det C(M^K,E^*) \\
&\cong& \det H(M,E) \otimes \det H(M,E^*)\\
&\cong& \det H(M,E) \otimes \det H(M,E).
\end{eqnarray*}
Here we used the isomorphism $\mathcal{D}: H(M,E^*) \to \Pi H(M,E)^*$ corresponding to Poincar\'e-duality, see Remark \ref{remark:PD_1}, in the 
transition from the second to the third line.

We conclude that every norm on $\det \mathcal{C}_K(M,E)$ yields a norm on $(\det H(M,E))^{\otimes 2}$
and then, via the diagonal mapping
\begin{align*}
\det H(M,E) \to (\det H(M,E))^{\otimes 2}, \qquad x \mapsto x\otimes x
\end{align*}
a norm on $\det H(M,E)$.
\end{remark}

\begin{lemma}\label{lemma:fiber_cohomology}
Let $E$ be a $\mathbb{Z}_2$-graded vector bundle over $M$ equipped with a flat superconnection $D$.
Denote the corresponding fiberwise differential on $E$ by $\partial$ and the corresponding connection on $E$ by $\nabla$.
The cohomology bundle $H_{\partial}(E)$ inherits a flat connection $[\nabla]$.

The canonical isomorphism of vector bundles
\begin{align*}
\det E \cong \det H_{\partial}(E)
\end{align*}
maps $\det\nabla$ to $\det[\nabla]$. In particular, the connection $\det\nabla$ is flat.
\end{lemma}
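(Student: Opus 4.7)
The plan is as follows.

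\textbf{Construction and flatness of $[\nabla]$.} First I would unpack the flatness axioms of $D$ recalled in the excerpt: the relevant ones are $[\partial,\nabla]=0$ and $R_\nabla=-[\partial,\omega_2]$, with brackets understood as graded commutators of odd operators. The first identity shows that $\nabla_X$ and $\partial$ commute on $\Gamma(E)$ for every vector field $X$; hence $\nabla$ preserves $\ker\partial$ and $\mathrm{im}\,\partial$. These are genuine subbundles because parallel transport preserves them and thereby keeps their fibrewise ranks constant. Consequently $\nabla$ descends to a connection $[\nabla]$ on the quotient $H_\partial(E)$. For its flatness I would use the second identity: for a cocycle $e\in\Gamma(\ker\partial)$ one has $R_\nabla(e)=-[\partial,\omega_2](e)=-\partial(\omega_2\,e)$, which lies in $\mathrm{im}\,\partial$. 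Hence $R_{[\nabla]}$ vanishes on $H_\partial(E)$.

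\textbf{Compatibility of determinants.} Next, I would identify $\det\nabla$ with $\det[\nabla]$ under the canonical isomorphism by appealing to the naturality of $\det V\cong \det H(V)$ with respect to isomorphisms of chain complexes (a homological fact that should be recorded in Appendix \ref{appendix1}). Given any smooth path $\gamma$ in $M$, parallel transport $\Pi_\gamma\colon E_{\gamma(0)}\to E_{\gamma(1)}$ by $\nabla$ is an isomorphism of fibrewise complexes (because $\nabla_X$ commutes with $\partial$), and the induced map on cohomology is precisely the parallel transport by $[\nabla]$. Naturality then yields a commuting square whose top row is $\det\Pi_\gamma$, bottom row is $\det[\Pi_\gamma]$, and whose vertical arrows are the canonical isomorphisms. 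Since a connection on a line bundle is uniquely determined by its parallel transports along all smooth paths, this identifies $\det\nabla$ with $\det[\nabla]$ under the canonical iso; flatness of $\det \nabla$ then follows from flatness of $[\nabla]$.

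\textbf{Main obstacle.} The crucial non-routine ingredient will be the naturality of $\det V\cong \det H(V)$ for chain isomorphisms, which has to be supplied by the appendix. A direct alternative would be to pick locally a frame adapted to the filtration $\mathrm{im}\,\partial\subset\ker\partial\subset E$ and to compute the connection $1$-form of $\det\nabla$ explicitly: using $[\nabla,\partial]=0$ one then checks that the super-trace contributions from $\mathrm{im}\,\partial$ and from $E/\ker\partial$ cancel against each other (because $\partial$ intertwines the induced connections on these two subquotients and reverses parity), leaving only the super-trace of the induced connection on $H_\partial(E)$.
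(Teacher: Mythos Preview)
Your argument is correct, but the route differs from the paper's. The paper never invokes parallel transport or naturality of the isomorphism $\det V\cong\det H(V)$ under chain isomorphisms. Instead it works directly with the two short exact sequences
\[
0\to B(E)\to Z(E)\to H_\partial(E)\to 0,\qquad 0\to Z(E)\to E\xrightarrow{\partial}\Pi B(E)\to 0,
\]
observes that $[\nabla,\partial]=0$ makes both of them short exact sequences of vector bundles \emph{with connections}, and then uses multiplicativity of $\det$ in short exact sequences (compatible with the induced connections) to get $\det E\cong \det H_\partial(E)\otimes\det B(E)\otimes\det\Pi B(E)$ as line bundles with connections. The last two factors give $\End(\det B(E))\cong\underline{\mathbb{R}}$ with its trivial connection, and the claim follows. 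Your ``direct alternative'' via a frame adapted to the filtration and the super-trace cancellation is essentially this same argument in local coordinates.

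Your primary approach via parallel transport is a legitimate and arguably more conceptual alternative: it reduces the compatibility of connections to a pointwise naturality statement for $\det V\cong\det H(V)$, at the cost of the (easy but not entirely trivial) facts that $\det$ commutes with parallel transport and that a connection on a line bundle is recovered from its holonomies along all paths. The paper's approach stays entirely at the level of bundles and connections and avoids this detour, while also making the cancellation $\det B\otimes\det\Pi B\cong\underline{\mathbb{R}}$ explicit; this is convenient later when the same short exact sequences are used implicitly in the spectral sequence argument.
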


\begin{proof}
Recall that the isomorphism $\det E \cong \det H_{\partial}(E)$ is a consequence of the following two exact sequences
\begin{align*}
 \xymatrix{
0 \ar[r] & B(E) \ar[r] & Z(E) \ar[r] & H_{\partial}(E) \ar[r] & 0 \\
0 \ar[r]  & Z(E) \ar[r] & E \ar[r]^{\partial} & \Pi B(E) \ar[r] & 0,
}
\end{align*}
where $Z(E)$ and $B(E)$ are the vector bundles of fiberwise closed and exact elements of $E$, respectively. 
By compatibility with the fiberwise differential, the connection $\nabla$ on $E$ induces connections on $Z(E)$, $B(E)$ and $H_{\partial}(E)$.
It is easy to check that both short exact sequences become short exact sequences of vector bundles with connections.

Hence we obtain
\begin{align*}
\det E \cong \det H_{\partial}(E) \otimes \det B \otimes \det \Pi B
\end{align*}
as line bundles with connections.
Since
\begin{align*}
\det B \otimes \Pi B \cong \End(\det B) \cong \underline{\mathbb{R}},
\end{align*}
where $\underline{\mathbb{R}}$ denotes the trivial line bundle with its trivial connection, are isomorphisms of line bundles with connections,
the claim follows.
\end{proof}
\begin{definition}
Let $E$ be a $\mathbb{Z}_2$-graded vector bundle over $M$, equipped with a flat superconnection $D$.
The determinant bundle associated to $E$ is the line bundle $\det E$, equipped with the flat connection $det \nabla$
induced from the connection $\nabla$ on $E$ which is associated to $D$.
\end{definition}
\begin{remark}
The reason we are interested in the flat vector bundle $(\det E,\det \nabla)$ comes from the natural identifications:
\begin{eqnarray*}
\det C(M^K, E) &\cong& \otimes_{\Delta \in K} \det \Pi^{\dim \Delta} E_{v_0(\Delta)}\\
 &\cong& \otimes_{\Delta \in K} \Gamma_{\textrm{flat}}(\Delta,\det E)^{(-1)^{\dim \Delta}},
\end{eqnarray*}
where $\Gamma_{\textrm{flat}}(\Delta,\det E)$ denotes the vector space of locally constant
sections with respect to the flat connection $\det \nabla$.
\end{remark}

\begin{definition}
Let $E$ be a $\mathbb{Z}_2$-graded vector bundle over $M$, equipped with a flat superconnection and
$\mu$ a flat, non-vanishing section of $(\det (E\oplus E^*), \det (\nabla\oplus \nabla^*))$.

The norm $\tau^{\mu}$ on $\det \C_K(M,E)$ associated to $\mu$ is given via the identification
\begin{eqnarray*}
 \det \C_K(M,E) &\cong & \det C(M^K,E) \otimes \det C(M^K,E^*)\\
 & \cong & \otimes_{\Delta \in K} \Gamma_{\mathrm{flat}}(\Delta, \det (E\oplus E^*))^{(-1)^{\dim \Delta}}.
\end{eqnarray*}
More precisely, we define a norm on each $\Gamma_{\mathrm{flat}}(\Delta, \det (E\oplus E^*))$  by requiring that $\mu|_\Delta$ has norm one.
This induces a norm on $\otimes_{\Delta \in K} \Gamma_{\mathrm{flat}}(\Delta, \det (E\oplus E^*))^{(-1)^{\dim \Delta}}$ and $\tau^\mu$ is the corresponding norm on $ \det \C_K(M,E) $ under the isomorphism above.
\end{definition}

\begin{lemma}
The construction of the norm $\tau^\mu$ satisfies the following properties.
\begin{enumerate}
 \item The norm $\tau^{\lambda \mu}$ on $\det \C_K(M,E)$ associated to a multiple of a flat, non-vanishing section $\mu$
of $(\det (E\oplus E^*), \det (\nabla\oplus \nabla^*))$ relates to $\tau^{\mu}$ by
\begin{align*}
 \tau^{\lambda \mu} = |\lambda|^{-\chi(M)} \tau^{\mu},
\end{align*}
where $\chi(M)$ denotes the Euler characteristic of $M$.
 \item The vector bundle $\det(E\oplus E^*)$ is canonically isomorphic to the trivial line bundle and moreover,
 the section corresponding to the constant function one is flat with respect to the connection $\det(\nabla \oplus \nabla^*)$ 

\item If $M$ is odd dimensional then
the norms on $\det \C_K(M,E)$ associated to any two flat, non-vanishing sections
of $(\det(E\oplus E^*),\det(\nabla \oplus \nabla^*))$ coincide.
\end{enumerate}
\end{lemma}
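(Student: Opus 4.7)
The plan is to prove the three items in turn, since each builds on the previous.

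For (1), I would unwind how the rescaling $\mu \mapsto \lambda \mu$ affects each tensor factor in
\[\det\mathcal{C}_K(M,E) \;\cong\; \bigotimes_{\Delta \in K} \Gamma_{\mathrm{flat}}(\Delta, \det(E\oplus E^*))^{(-1)^{\dim\Delta}}.\]
On a one-dimensional space $L$ equipped with the norm declared by $\|\mu|_\Delta\|=1$, replacing $\mu|_\Delta$ by $\lambda\mu|_\Delta$ multiplies the norm on $L$ by $|\lambda|^{-1}$, while the induced dual norm on $L^{*}$ is multiplied by $|\lambda|^{+1}$. Hence on $L^{(-1)^{\dim\Delta}}$ the norm scales by $|\lambda|^{-(-1)^{\dim\Delta}}$, and multiplying over all simplices produces the factor $|\lambda|^{-\sum_\Delta(-1)^{\dim\Delta}} = |\lambda|^{-\chi(M)}$.

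For (2), my plan is to exhibit the trivialization as a composition of canonical isomorphisms of line bundles with connections,
\[\det(E\oplus E^*) \;\cong\; \det E \otimes \det E^* \;\cong\; \det E \otimes (\det E)^* \;\cong\; \End(\det E) \;\cong\; \underline{\mathbb{R}},\]
under which the constant section $1$ corresponds to the identity endomorphism of $\det E$. The induced connection on $\End(L)$ for any flat line bundle $L$ acts as the commutator $[\nabla_L, -]$, which vanishes on $\mathrm{id}_L$, proving flatness. To pin down signs I would work in a local frame $e_1,\dots,e_n$ of $E$ with connection matrix $A$: then $\det\nabla$ acts by $\mathrm{tr}(A)$ on $e_1\wedge\cdots\wedge e_n$, while $\nabla^*$ acts by $-A^T$, so $\det\nabla^*$ acts by $-\mathrm{tr}(A)$ on $e_1^*\wedge\cdots\wedge e_n^*$, and the total derivative on the tensor product vanishes.

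For (3), the conclusion is then immediate. Item (2) produces a canonical flat trivializing section of $\det(E\oplus E^*)$, and any other flat non-vanishing section differs from it by a nowhere-zero locally constant function. Given two flat non-vanishing sections $\mu_1,\mu_2$, on each connected component $c$ of $M$ one has $\mu_2|_c = \lambda_c\,\mu_1|_c$ for some $\lambda_c \in \mathbb{R}^\times$; applying (1) componentwise gives $\tau^{\mu_2} = \prod_c |\lambda_c|^{-\chi(c)}\,\tau^{\mu_1}$. Since $M$ is closed, orientable and odd-dimensional, Poincar\'e duality forces $\chi(c) = 0$ on each component, so the product equals $1$ and the two norms agree. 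The main substantive step is (2), where one must verify that the chain of isomorphisms is compatible with the induced connections, not just with the underlying vector-bundle structure; this reduces to the standard facts that the dual connection corresponds to $-A^T$ and that $\det\nabla$ acts by $\mathrm{tr}(A)$, and it is then purely formal, while (1) and (3) are essentially bookkeeping.
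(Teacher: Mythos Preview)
Your proposal is correct and follows essentially the same approach as the paper. For (1) both arguments unwind the rescaling simplex by simplex to obtain the factor $|\lambda|^{-\chi(M)}$; for (2) the paper simply invokes the canonical isomorphism $\det E^* \cong (\det E)^*$ carrying $\det(\nabla^*)$ to $(\det\nabla)^*$, which is exactly the content of your chain through $\End(\det E)$ and your local-frame check; and for (3) the paper concludes directly from (1) and $\chi(M)=0$, while you add the componentwise refinement, which is a harmless sharpening of the same argument.
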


\begin{proof}
In order to prove the first claim we will first show that over each simplex $\Delta \in K$ the norms $\tau_\Delta^{\lambda \mu}$ and
$\tau_{\Delta}^\mu$ on  $\Gamma_{\mathrm{flat}}(\Delta, \det (E\oplus E^*))$ are related by
\[ \tau_\Delta^{\lambda \mu}=|\lambda|^{-1} \tau_\Delta^{ \mu}.\]
Indeed we see that:
\[\tau_\Delta^{ \mu}(\mu)=1= \tau_\Delta^{\lambda \mu}(\lambda \mu)=|\lambda| \tau_\Delta^{\lambda \mu}(\mu).\]
It follows that the corresponding norms on $\Gamma_{\mathrm{flat}}(\Delta, \det (E\oplus E^*))^{(-1)^{\dim \Delta}}$
are related by:
\[ \tau_\Delta^{\lambda \mu}=|\lambda|^{-(-1)^{\dim \Delta}} \tau_\Delta^{ \mu}.\]
By tensoring over all simplices we obtain that:
\[ \tau^{\lambda \mu}=|\lambda|^{-\sum_{\Delta \in K}(-1)^{\dim \Delta}} \tau^{ \mu}=|\lambda|^{-\chi(M)} \tau^{ \mu}.\]

The second claim follows from the fact that there is a canonical isomorphism
\[\det E^*\cong (\det E)^*\]
under which the connection $ \det (\nabla^*)$ corresponds to $\det(\nabla)^*.$
The last claim is a direct consequence of the first part and the fact that odd dimensional manifolds have zero Euler characteristic.
\end{proof}

\begin{definition}\label{definition:torsion}
Let $M$ be a closed manifold of odd dimension and $(K,\phi)$
a smooth triangulation of $M$ together with a total ordering of its vertex set.
Moreover, let $E$ be a $\mathbb{Z}_2$-graded vector bundle equipped with a flat superconnection $D$.
The Reidemeister torsion $\tau_R$ is the norm on the determinant line $\det H(M,E)$
obtained via the identification
\begin{align*}
T_K: \det \C_K(M,E) \to (\det H(M,E))^{\otimes 2}.
\end{align*}
More explicitly, we set:
\begin{align*}
\tau_R(x):= \sqrt{\tau^{\mu}(T_K^{-1}(x\otimes x))},
\end{align*}
where $\mu$ is an arbitrary flat non-vanishing section of  the flat bundle $(\det(E\oplus E^*),\det(\nabla \oplus \nabla^*))$.
\end{definition}

\subsection{Independence of the choices}

We will prove here the main result of the paper:

\begin{theorem}\label{theorem:invariance}
Let $M$ be a closed, orientable, odd dimensional manifold and $E$ a $\mathbb{Z}_2$-graded vector bundle over $M$, equipped with a flat superconnection.
The Reidemeister norm $\tau_R$ on $\det H(M,E)$ is independent of the choice of a smooth triangulation.
\end{theorem}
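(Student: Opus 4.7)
The plan is to reduce invariance to a classical combinatorial statement by exploiting the canonical trivialization of the flat line bundle $\det(E \oplus E^*)$. By a theorem of Whitehead, any two smooth triangulations of $M$ admit a common smooth refinement, so it suffices to show $\tau_R^K = \tau_R^{K'}$ when $(K', \phi')$ is a smooth subdivision of $(K, \phi)$ with compatible vertex orderings. Independence from the vertex ordering itself is a separate, routine sign argument: permuting vertices acts on the cellular basis by signed permutations, and these signs are absorbed by the determinant line and the squaring $x \mapsto x \otimes x$ in Definition \ref{definition:torsion}.

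The key observation is that the norm $\tau^\mu_K$ trivializes. By Lemma \ref{lemma:fiber_cohomology} applied to $E \oplus E^*$, the line bundle $(\det(E \oplus E^*), \det(\nabla \oplus \nabla^*))$ is canonically isomorphic to the trivial flat line bundle $\underline{\mathbb{R}}$. Choosing $\mu$ to be the constant section $1$, each factor $\Gamma_{\mathrm{flat}}(\Delta, \det(E \oplus E^*))^{(-1)^{\dim \Delta}}$ is canonically identified with $\mathbb{R}$, and $\tau^\mu_K$ becomes the standard norm on $\det \mathcal{C}_K(M,E) \cong \mathbb{R}$ with canonical unit generator. The equality $\tau_R^K = \tau_R^{K'}$ thus reduces to showing that the canonical generators of $\det \mathcal{C}_K$ and $\det \mathcal{C}_{K'}$ map to the same element of $(\det H(M,E))^{\otimes 2}$, up to sign, under $T_K$ and $T_{K'}$.

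For the comparison, the subdivision $K' \succ K$ induces a chain map $s: \mathcal{C}_K(M,E) \to \mathcal{C}_{K'}(M,E)$, namely the classical cellular subdivision map tensored with the flat coefficient data on $E$ and $E^*$. That $s$ is a quasi-isomorphism follows from a spectral sequence argument as in Lemma \ref{singular-simplicial}, which reduces on the $E_2$-page to the classical fact that simplicial subdivision induces an isomorphism on cohomology with coefficients in the local system $H_\partial(E \oplus E^*)$. The map $s$ is compatible with $T_K$ and $T_{K'}$ by the naturality of the determinant-line identification coming from the common quasi-isomorphism to $\Omega(M,E)$.

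The main obstacle is showing that $\det s: \det \mathcal{C}_K \to \det \mathcal{C}_{K'}$ is a scalar of absolute value one. Since $\det(E \oplus E^*)$ is canonically trivial, the local coefficient data contributes trivially to $\det s$, which then reduces to the determinant of the classical simplicial subdivision chain map on the cellular complex of $M$; this determinant is $\pm 1$, being the combinatorial content of Whitehead's simple-homotopy invariance of cellular subdivisions. More abstractly, the argument is encoded in the Appendix lemmas asserting that acyclic mapping cones with parallel bases contribute trivial torsion. Finally, odd-dimensionality of $M$ enters through $\chi(M) = 0$, which kills the multiplicative ambiguity $|\lambda|^{-\chi(M)}$ in the choice of $\mu$ and ensures the norm descends well-defined via the squaring map $x \mapsto x \otimes x$.
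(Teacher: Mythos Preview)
Your approach is genuinely different from the paper's, and it has a real gap. The paper does not construct or use any subdivision chain map between $\mathcal{C}_K(M,E)$ and $\mathcal{C}_{K'}(M,E)$. Instead, it filters $\mathcal{C}_K(M,E)$ by simplicial degree and applies Proposition~\ref{proposition:spectral_sequence}: the isomorphism $\det\mathcal{C}_K \cong \det H(M,E)\otimes\det H(M,E^*)$ factors through $\det\mathcal{E}_1 \cong \det\mathcal{E}_2 \cong \cdots \cong \det\mathcal{E}_\infty$. Lemma~\ref{lemma:fiber_cohomology} then identifies the norm on $\det\mathcal{E}_1$ with the cellular norm on $\det\mathcal{C}_K(M,H_\partial(E))$, where $H_\partial(E)$ carries only the induced flat connection $[\nabla]$. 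Hence the norm on $\det\mathcal{E}_2 \cong \det\big(H(M^K,H_\partial(E))\oplus H(M^K,H_\partial(E)^*)\big)$ is exactly the classical Reidemeister norm of the ordinary flat bundle $H_\partial(E)$, whose triangulation-independence is quoted from \cite{Faber}. Everything from $\mathcal{E}_2$ onward is intrinsic to $(M,E)$, so the norm on $\det H(M,E)\otimes\det H(M,E^*)$ is independent of $K$ without ever matching two triangulations against each other.

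The gap in your argument is the construction of $s$. The differential on $C(M^K,E)=\hat C(M^K,\iota^*\!\int(E))$ is \emph{not} the classical cellular differential tensored with flat coefficients: it comes from a representation up to homotopy and decomposes as $D_0+D_1+D_2+\cdots$, where $D_k$ raises simplicial degree by $k$ and encodes the higher holonomies of the $\A_\infty$-integration functor over $k$-simplices. Your ``classical cellular subdivision map tensored with the flat coefficient data'' would at best intertwine $D_0$ and $D_1$; there is no reason it commutes with the $D_k$ for $k\ge 2$, so as written it is not a chain map for the full differential. One could hope to correct it by adding filtration-increasing terms via some homotopy-transfer argument, and then argue that such strictly upper-triangular corrections contribute nothing to the determinant---but that is not in your proposal, and carrying it out is essentially as much work as the spectral-sequence reduction the paper uses. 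The paper's route sidesteps the issue entirely: rather than comparing $K$ and $K'$ directly, it shows that for each fixed $K$ the norm already factors through the page $\mathcal{E}_2$, which depends only on the flat bundle $H_\partial(E)$ and hence is manifestly governed by the classical theory.
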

\begin{proof}
Choose a smooth triangulation $(K,\phi)$ of $M$ equipped with a total ordering of its vertex set.
The idea of the proof is to use the spectral sequence $\mathcal{E}^{p,\overline{	q}}_{r}$ associated to the filtration
\begin{align*}
F_p\mathcal{C}_K(M,E) := \prod_{k \ge p} \left(C^{k}(M^K,E)\oplus C^k(M^K,E^*)\right),
\end{align*}
in order to reduce the statement to the classical invariance statement for ordinary Reidemeister torsion.

By Proposition \ref{proposition:spectral_sequence} we know that the composition of isomorphisms

\[\det \mathcal{C}_K(M,E) \cong \det H(\mathcal{C}_K(M,E)) \cong \det H(M,E) \otimes \det H(M,E^*),\]
is equal to the composition
\[\det \mathcal{C}_K(M,E) \cong \det {\cal E}_1 \cong \det {\cal E}_2 \cong \cdots 
 \cdots \cong \det {\cal E}_{\infty} \cong 
 \det H(M,E) \otimes \det H(M,E^*).\]

On the other hand, Lemma \ref{lemma:fiber_cohomology} implies that the norm on $\det {\cal E}_1$ induced from that on $\det \mathcal{C}_K(M,E)$
is equal to the norm obtained by the isomorphism ${\cal E}_1 \cong \mathcal{C}_K(M,H_{\partial}(E))$, where  $H_{\partial}(E)$ is
the cohomology bundle of $E$ with the induced flat connection.
In particular, the norm on $\det {\cal E}_2$ will coincide with the norm obtained via the isomorphism
\begin{align*}
\mathcal{E}_2\cong  H(\mathcal{C}_K(M,H_{\partial}(E))\cong H(M^K, H_{\partial}(E)) \oplus H(M^K,  H_{\partial}(E)^*).
\end{align*}
By the triangulation independence of the usual Reidemeister torsion it is known that the norm on 
\[ \det \left(H(M^K,  H_{\partial}(E)) \oplus H(M^K,  H_{\partial}(E)^*)\right)\]
does not depend on the chosen triangulation in the sense that there is a unique norm on
\begin{align*}
\det H(M,H_{\partial}(E))\otimes \det H(M,H_{\partial^*}(E^*))
\end{align*}
such that for any smooth triangulation $(K,\phi)$, equipped with a total ordering of its vertex set, the isomorphism
\begin{align*}
\det H(M,H_{\partial}(E))\otimes \det H(M,H_{\partial^*}(E^*)) \cong  \det \left(H(M^K,  H_{\partial}(E)) \oplus H(M^K,  H_{\partial}(E)^*)\right)
\end{align*}
is an isomorphism of normed vector spaces, see \cite{Faber}.

This implies that the norm on
\begin{align*}
\det \mathcal{C}_K(M,E) \cong \det \mathcal{E}_\infty \cong \det H(M,E) \otimes \det H(M,E^*) 
\end{align*}
also does not depend on the chosen triangulation $(K,\phi)$ and the ordering of its vertex set.
Consequently the norm $\tau_R$ is a well defined invariant of the flat superconnection $D$.
\end{proof}

\begin{remark}
There is a natural decreasing filtration on the complex $\Omega(M,E)$, given by
\[F_p \Omega(M,E):= \oplus_{k\geq p}  \Omega^k(M,E).\]
This filtration induces a spectral sequence ${\cal E}^{p,\overline{q}}_r$ with a natural isomorphism:
\[ {\cal E}^{p,\overline{q}}_2 \cong H^p(M,H^{\overline{q}}_\partial(E)),  \]
where
$H^{\overline{q}}_\partial(E)$ denotes the cohomology vector bundle with the induced flat connection.
This computation induces a canonical isomorphism
\begin{align*}
 \det  H(M,H_{\partial}(E)) \cong \det \mathcal{E}_2 \cong \det \mathcal{E}_\infty \cong \det H(M,E).
\end{align*}

Since each $H^{\overline{q}}_\partial(E)$ is a flat vector bundle in its own right, the usual Reidemeister torsion yields a norm on
$\det  H(M,H_\partial(E))$.
It turns out that the isomorphism $\det H(M,H_{\partial}(E)) \cong \det H(M,E)$ is an isomorphism of metric vector spaces, 
as one can show using the following result:
\end{remark}

\begin{proposition}\label{PD}
The following diagram commutes:
\begin{align*}
\xymatrix{
\det H(M,H_{\partial^*}(E^*)) \ar[r]^{\cong} \ar[d]^{\cong} & \det H(M,H_{\partial}(E)) \ar[d]^{\cong}\\
\det H(M,E^*) \ar[r]^{\cong}& \det H(M,E)
}
\end{align*}
Here the horizontal isomorphisms are induced by Poincar\'e duality and the vertical maps are the canonical isomorphisms coming from the 
spectral sequence.
\end{proposition}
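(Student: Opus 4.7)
The approach is to exhibit a perfect pairing at the level of spectral sequences which realizes Poincar\'e duality both at the $E_2$ page and at $E_\infty$, and then to invoke the duality formalism for determinant lines of spectral sequences from the Appendix. The starting point is that the Poincar\'e pairing
\[\Omega(M,E^*)\otimes \Omega(M,E)\rightarrow \mathbb{R},\qquad (\alpha,\omega)\mapsto \int_M \langle \alpha,\omega\rangle\]
satisfies $\int_M\langle\alpha,\omega\rangle = 0$ whenever $\alpha\in F_p\Omega(M,E^*)$ and $\omega\in F_{n-p+1}\Omega(M,E)$, since the integrand then has form-degree at least $n+1$, where $n=\dim M$. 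Hence it is a pairing of filtered complexes with the filtrations in dual position, and therefore induces a pairing of the associated spectral sequences $\mathcal{E}_r(E^*)$ and $\mathcal{E}_r(E)$ for every $r\geq 2$, compatible with the differentials on each page and converging to the Poincar\'e duality pairing between $H(M,E^*)$ and $H(M,E)$.

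Next I would identify the induced pairing on the $E_2$ page. Using the isomorphisms $\mathcal{E}_2^{p,\overline{q}}(E)\cong H^p(M,H^{\overline{q}}_\partial(E))$ and the analogous one for $E^*$, together with the natural identification $H_{\partial^*}(E^*)\cong H_\partial(E)^*$ of flat bundles (which itself follows from fibrewise linear algebra, cf.\ Lemma \ref{lemma:fiber_cohomology}), one checks that the induced pairing is precisely the ordinary Poincar\'e pairing for the flat bundle $H_\partial(E)$. The duality statement for determinant lines of spectral sequences collected in the Appendix, together with Proposition \ref{proposition:spectral_sequence}, then guarantees that the composite isomorphism
\[\det H(M,E^*)\cong \det\mathcal{E}_\infty(E^*)\cong \det\mathcal{E}_2(E^*)\cong \det H(M,H_{\partial^*}(E^*))\]
intertwines the dualization coming from $\mathcal{D}$ on $\mathcal{E}_\infty$ with the dualization coming from Poincar\'e duality on each $H_\partial^{\overline q}(E)$ on $\mathcal{E}_2$. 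This is exactly the commutativity of the square in the proposition once one passes to determinant lines.

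The main obstacle is the identification on the $E_2$ page: one must carry out an explicit computation showing that the pairing on $H^p(M,H^{\overline{q}}_{\partial^*}(E^*))\otimes H^{n-p}(M,H^{\overline{n-q}}_\partial(E))$ induced by integration on $M$ coincides with the Poincar\'e pairing for the local system $H_\partial(E)$. The natural way to do this is to choose a fibrewise Hodge decomposition $E\cong H_\partial(E)\oplus\mathrm{im}\,\partial\oplus\mathrm{coim}\,\partial$ compatible with the dual decomposition on $E^*$, and to verify that under this splitting the de Rham integral restricts to the usual Poincar\'e pairing on the $H_\partial$-summand while the remaining summands contribute trivially. Once this identification is in place, the rest of the argument is purely formal from the spectral sequence duality machinery in the Appendix.
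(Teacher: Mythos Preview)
Your overall strategy is the same as the paper's: apply Lemma~\ref{lemmacompatibility} to the integration pairing between $\Omega(M,E^*)$ and $\Omega(M,E)$, filtered by form degree. However, you have misidentified where the real work lies.

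The identification of the $\mathcal{E}_2$ pairing with the ordinary Poincar\'e pairing for the flat bundle $H_\partial(E)$ is not the main obstacle; it follows by unwinding definitions. On $\mathcal{E}_0$ the pairing is the integral, on $\mathcal{E}_1\cong\Omega(M,H_\partial(E))$ it is still the integral (using the fiberwise pairing $H_{\partial^*}(E^*)\otimes H_\partial(E)\to\mathbb{R}$), and passing to $\mathcal{E}_2$ gives the de Rham Poincar\'e pairing for $H_\partial(E)$. No fiberwise Hodge decomposition is needed here.

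What you have missed is the hypothesis in part~(5) of Lemma~\ref{lemmacompatibility} that you actually need to verify: the compatibility of the induced filtrations on cohomology, namely
\[
F_p H(M,E^*)=\bigl\{[b]\in H(M,E^*):\langle[a],[b]\rangle=0\ \text{for all }[a]\in F_{n-p+1}H(M,E)\bigr\}.
\]
One inclusion is obvious from the bidegree of the pairing, but the reverse inclusion is not automatic: knowing that a class pairs trivially against $F_{n-p+1}H(M,E)$ does not immediately produce a \emph{representative} in $F_p\Omega(M,E^*)$. The paper establishes this using Hodge theory: choose metrics, take the harmonic representative $b$ of $[b]$, and argue by contradiction that its lowest form-degree component $b_{p-1}$ must vanish by pairing $b$ against the harmonic part of $\ast b_{p-1}$. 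This is the substantive step, and your proposal does not address it. (An alternative route, also absent from your sketch, would be to deduce the filtration compatibility by a dimension count once you know nondegeneracy on $\mathcal{E}_\infty$; but that argument has to be made explicitly.)
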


\begin{proof}
This is a direct application of Lemma \ref{lemmacompatibility}. The only hypothesis which is not obviously satisfied is the 
compatibility of the filtrations in cohomology. Thus, we only need to prove that if we set $n=\dim M$, the filtration on $H(M,E^*)$ is given by:

\[F_p(H(M,E^*))=\{ [b] \in H(M,E^*): \langle a,b\rangle =0 \text{ if } [a] \in F_{n-p+1}(H(M,E))        \}.\]
Clearly, the left hand side of the equation is contained in the right hand side. 
In order to prove the other inclusion we will use Hodge decomposition.
We choose a Riemannian metric on $M$ as well as a fiber metric on $E$. 
We claim that for an element $[b]$ in the right hand side of the equation above, the harmonic representative $b$ of the 
cohomology class belongs to $F_p(\Omega(M,E^*))$. 
We will argue by contradiction. Suppose the opposite is true and consider the smallest $p$ for which this happens. We may assume that 
the cohomology class $[b]$ is homogeneous (with respect to the total $\mathbb{Z}_2$-grading) and write
\[b=b_{p-1}+ b_p+\dots ,\]
where $b_k$ is a differential form of degree $k$. Consider $\ast b_{p-1}\in \Omega^{n-p+1}(M,E)$ and use Hodge decomposition to write
\[\ast b_{p-1}=x+y+z,\]
with $x \in {\cal H}(M,E)$, $y \in  \im D$ and  $z \in \im D^*$, where ${\cal H}(M,E)$ denotes the space of harmonic forms. We know that
\[\langle b, \ast b_{p-1}\rangle =\langle b_{p-1} ,\ast b_{p-1} \rangle \neq 0. \]
On the other hand, since $b$ is a harmonic form we have
\[ \langle b,y\rangle =\langle b,z \rangle =0,\]
so we conclude that
\[0\neq \langle b, x\rangle= \langle [b], [x]\rangle, \]
but this contradicts the hypothesis because $[x]\in F_{n-p+1}H(M,E)$.

\end{proof}

From Proposition \ref{PD} and the proof of Theorem \ref{theorem:invariance} we conclude:
\begin{corollary}\label{equal}
The Reidemeister torsion of $E$ coincides with the Reidemeister torsion of the cohomology flat vector bundles.
More precisely, the isomorphism:
\[ \det  H(M,H^{\overline{0}}_\partial(E))\otimes \det  H(M,H^{\overline{1}}_\partial(E)))\cong \det H(M,E),\]
is an isomorphism of metric vector spaces.
\end{corollary}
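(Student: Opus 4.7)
The plan is to combine the spectral sequence argument already used in the proof of Theorem \ref{theorem:invariance} with the Poincar\'e duality compatibility statement of Proposition \ref{PD}. Unravelling the definition, $\tau_R$ on $\det H(M,E)$ is obtained by the diagonal embedding
\[\det H(M,E) \to (\det H(M,E))^{\otimes 2},\]
followed by taking a square root of the norm on $(\det H(M,E))^{\otimes 2}$ coming from the identification
\[T_K: \det \mathcal{C}_K(M,E) \xrightarrow{\cong} \det H(M,E)\otimes \det H(M,E^*) \xrightarrow{\mathrm{PD}} (\det H(M,E))^{\otimes 2}.\]
The goal is to identify this with the analogous construction carried out for the flat bundle $H_\partial(E)$, whose Reidemeister torsion is built from $\mathcal{C}_K(M, H_\partial(E)) = C(M^K, H_\partial(E)) \oplus C(M^K, H_\partial(E)^*)$.

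First, I would revisit the chain of isomorphisms from the proof of Theorem \ref{theorem:invariance}. The filtration by form degree produces a spectral sequence $\mathcal{E}_r$ converging to $H(\mathcal{C}_K(M,E))$. The key observation from that proof, enabled by Lemma \ref{lemma:fiber_cohomology}, is that the induced norm on $\det \mathcal{E}_1$ coincides with the one obtained from the isomorphism $\mathcal{E}_1 \cong \mathcal{C}_K(M, H_\partial(E))$, where $H_\partial(E)$ is equipped with the induced flat connection $[\nabla]$. Hence Proposition \ref{proposition:spectral_sequence} gives an isomorphism of \emph{normed} vector spaces
\[\det \mathcal{C}_K(M,E) \cong \det \mathcal{E}_\infty \cong \det H(M, H_\partial(E)) \otimes \det H(M, H_{\partial^*}(E^*)),\]
and the right-hand side carries the norm that enters the definition of the classical Reidemeister torsion of $H_\partial(E)$ (together with its dual).

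It remains to check that the Poincar\'e duality used to collapse $\det H(M,E)\otimes \det H(M,E^*)$ to $(\det H(M,E))^{\otimes 2}$ corresponds, under the spectral sequence isomorphisms, to the Poincar\'e duality used at the level of the cohomology bundle $H_\partial(E)$. This is exactly the content of Proposition \ref{PD}: the commuting square of determinant lines identifies the two Poincar\'e pairings. Consequently the composite $T_K$ factors, as an isomorphism of normed vector spaces, through the analogous composite $T_K^{H_\partial(E)}$ defined for the flat bundle $H_\partial(E)$. Passing to square roots via the diagonal embedding yields the claim: $\tau_R$ on $\det H(M,E)$ equals the ordinary Reidemeister torsion on $\det H(M,H^{\overline{0}}_\partial(E))\otimes \det H(M,H^{\overline{1}}_\partial(E))$ under the canonical identification.

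The main obstacle in this argument is precisely the compatibility of Poincar\'e duality with the spectral sequence isomorphisms, which is what makes Proposition \ref{PD} the genuine technical input of the proof; once that diagram is known to commute, the remainder is a diagram chase through the identifications already established in the proof of Theorem \ref{theorem:invariance}.
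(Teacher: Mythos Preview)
Your proposal is correct and follows exactly the approach indicated by the paper, which simply records that the corollary follows from Proposition \ref{PD} together with the proof of Theorem \ref{theorem:invariance}. You have correctly identified and unpacked the two ingredients: the spectral sequence comparison of norms carried out in that proof, and the commutativity of the Poincar\'e duality square supplied by Proposition \ref{PD}.
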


\subsection{Invariance under quasi-isomorphism}

As we mentioned before, the set of flat superconnections over $M$ forms a $dg$-category which we denote
$\RRep(TM)$. 
Every morphism $\phi:E \rightarrow E'$ decomposes as a  sum:
\begin{align*}
\phi = \phi_0 + \phi_1 + \cdots,
\end{align*}
where $\phi_i \in \Omega^{i}(M,\End(E,F))$. 
We denote the subset of cocycles of even degree in $\RHom(E,E')$ by $\Hom(E,E')$.
Observe that for $\phi \in \Hom(E,E')$ the component $\phi_0$ is a vector bundle map from $E$ to $E$' which is compatible with the fiberwise
differentials.

\begin{definition}
An element $\phi \in \Hom(E,E')$ is a quasi-isomorphism if its component
\begin{align*}
 \phi_0: E \to E'
\end{align*}
induces an isomorphism between the fiberwise cohomologies.
\end{definition}

\begin{remark}
By a standard spectral sequence argument one can check that a quasi-isomorphism $\phi \in \Hom(E,E')$ induces and isomorphism in cohomology
$[\phi]: H(M,E) \to H(M,E')$.
\end{remark}

\begin{proposition}\label{quasi-iso}
Let $M$ be a closed, orientable, odd dimensional manifold and $\phi$ a quasi-isomorphism between flat superconnections $E$ and $E'$.
Then, the induced isomorphism
\begin{align*}
 \det [\phi]: \det H(M,E) \to \det H(M,E')
\end{align*}
is compatible with the Reidemeister torsions on $\det H(M,E)$ and $\det H(M,E')$, respectively.
\end{proposition}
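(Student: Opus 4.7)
The plan is to reduce the statement to the classical invariance of the Reidemeister torsion of flat vector bundles under flat isomorphisms, using Corollary \ref{equal}.

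First, I would check that the quasi-isomorphism $\phi$ induces an isomorphism of flat vector bundles $[\phi_0]\colon H_\partial(E)\to H_\partial(E')$. Since $\phi\in\Hom(E,E')$, the cocycle condition reads $D'\circ\phi=\phi\circ D$. Its form-degree zero component gives $\partial'\circ\phi_0=\phi_0\circ\partial$, so $\phi_0$ descends to a bundle map $[\phi_0]$ on fiberwise cohomology. The form-degree one component yields $\nabla'\circ\phi_0-\phi_0\circ\nabla=\phi_1\circ\partial-\partial'\circ\phi_1$, which, together with the description of the induced flat connections $[\nabla]$ and $[\nabla']$ from Lemma \ref{lemma:fiber_cohomology}, shows that $[\phi_0]$ is flat. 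The quasi-isomorphism assumption ensures that $[\phi_0]$ is a fiberwise, and hence global, isomorphism of flat vector bundles.

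Second, I would establish the commutativity of the diagram
\[
\xymatrix{
\det H(M,E) \ar[r]^{\det[\phi]} \ar[d]_{\cong} & \det H(M,E') \ar[d]^{\cong} \\
\det H(M,H_\partial(E)) \ar[r]^{\det H(M,[\phi_0])} & \det H(M,H_\partial(E'))
}
\]
whose vertical arrows are the canonical isomorphisms of the Remark preceding Proposition \ref{PD}. The key point is that $\phi=\phi_0+\phi_1+\cdots$ preserves the filtration $F_p\Omega(M,E)=\bigoplus_{k\geq p}\Omega^k(M,E)$ because each $\phi_i$ has form degree $i\geq 0$. Hence $\phi$ induces a morphism of the corresponding spectral sequences which on the $E_2$-page coincides with $H^*(M,[\phi_0])$ and on the abutment equals $\det[\phi]$. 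Commutativity then follows from the naturality of the successive determinant-line isomorphisms $\det\mathcal{E}_r\cong\det\mathcal{E}_{r+1}$ recalled in Appendix \ref{appendix1}.

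Third, I would invoke the classical invariance of the Reidemeister torsion of a flat vector bundle under flat isomorphism. For any smooth triangulation of $M$ with ordered vertex set, $[\phi_0]$ induces an isomorphism of cellular complexes $\C_K(M,H_\partial(E))\cong\C_K(M,H_\partial(E'))$ that matches flat trivializations of $\det(H_\partial(E)\oplus H_\partial(E)^*)$ with those of $\det(H_\partial(E')\oplus H_\partial(E')^*)$. Consequently the norms $\tau^\mu$ correspond and $\det H(M,[\phi_0])$ is an isometry for the classical Reidemeister torsions. Combining this with Corollary \ref{equal} applied to both $E$ and $E'$, and with the commutative square above, yields that $\det[\phi]$ is an isometry.

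The main obstacle is the middle step: verifying that the filtered map induced by $\phi$ intertwines the determinant-line identifications at every page of the spectral sequence, so that no contribution from the higher components $\phi_1,\phi_2,\ldots$ survives at the $E_\infty$-level and the norm really is determined by $[\phi_0]$ alone.
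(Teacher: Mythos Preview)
Your proposal is correct and takes a genuinely different route from the paper's own proof. The paper argues directly on the finite-dimensional cellular complex $\mathcal{C}_K(M,E)=C(M^K,E)\oplus C(M^K,E^*)$: it transports $\phi$ (and $\phi^*$) to the simplicial side via the integration $\A_\infty$-functor, observes that the resulting chain maps respect the cochain-degree filtration, and checks that on the $\mathcal{E}_1$-page---which is exactly $\mathcal{C}_K(M,H_\partial(E))$---the induced map preserves the norm because $[\phi_0]$ identifies the flat line bundles $\det(H_\partial(E)\oplus H_{\partial^*}(E^*))$ and $\det(H_\partial(E')\oplus H_{\partial^*}((E')^*))$. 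A final Poincar\'e-duality square then passes from the doubled picture to $\det H(M,E)$ alone. You instead work on the de Rham side, invoke Corollary \ref{equal} for both $E$ and $E'$, and reduce everything to the classical statement for $[\phi_0]$; this is cleaner and avoids re-running the simplicial machinery, at the cost of relying on the (already established) Corollary \ref{equal} and Proposition \ref{PD}. Your ``main obstacle'' is not a real obstacle: the naturality of $\det\mathcal{E}_r\cong\det\mathcal{E}_{r+1}$ under filtered chain maps is immediate from the naturality of $\det C\cong\det H(C)$, so $\det\phi_2=\det\phi_\infty=\det G[\phi]=\det[\phi]$ under the canonical identifications, and the higher $\phi_i$'s, while they may affect $\phi_\infty$ itself, leave its determinant unchanged.
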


\begin{proof}
The $\A_\infty$-functor constructed in  \cite{BS,AS2} gives a chain map:
\begin{align*}
 \int[\phi]: \hat{C}(M,E) \to \hat{C}(M,E').
\end{align*}

Since the pullback operation is functorial, we can choose a triangulation $(K,\phi)$ of $M$, and restrict the morphism $\int [\phi] $ to obtain a chain map: 
\begin{align*}
 \int[\phi]: C(M^K, E) \to C(M^K,E'),
\end{align*}
making the diagram
\begin{align*}
 \xymatrix{
C(M^K, E) \ar[r]^{\int[\phi]} & C(M^K, E') \\
\Omega(M,E) \ar[r]^{\phi} \ar[u]& \Omega(M,E') \ar[u]
}
\end{align*}
commute up to a homotopy $h: \Omega(M,E) \to C(M^K,E')$. Similarly, we have a chain map
\begin{align*}
 \int[\phi^*]: C(M^K,(E')^*) \to C(M^K,E^*),
\end{align*}
making the diagram
\begin{align*}
 \xymatrix{
C(M^K,(E')^*) \ar[r]^{\int[\phi^*]} & C(M^K,E^*) \\
\Omega(M,(E')^*) \ar[r]^{\phi^*} \ar[u]& \Omega(M,E^*) \ar[u]
}
\end{align*}
commute up to a homotopy $\tilde{h}: \Omega(M,(E')^*) \to C(M^K,E^*)$.

The chain maps $\int[\phi]$ and $\int([\phi^*])$ respect the filtrations given by cochain degrees and induce isomorphisms on the first 
sheets of the corresponding spectral sequences, i.e. 
\begin{eqnarray*}
C(M^K, H_{\partial}(E)) = \mathcal{E}_1C(M^K,E)  &\cong& \mathcal{E}_1C(M^K, E') = C(M^K, H_{\partial}(E')),\\
C(M^K,(E')^*)) = \mathcal{E}_1C(M^K,(E')^*)  &\cong& \mathcal{E}_1C(M^K,E^*) = C(M^K, H_{\partial^*}(E^*)).
\end{eqnarray*}
Together, these yield an isomorphism
\begin{align*}
\det \mathcal{E}_1\mathcal{C}_K(M,E) \cong \det \mathcal{E}_1\mathcal{C}_K(M,E').
\end{align*}
We saw in the proof of Theorem \ref{theorem:invariance} that the norm on $\det \mathcal{E}_1\mathcal{C}_K(M,E)$
coincides with the norm obtained via the natural identification
\begin{align*}
 \det \mathcal{E}_1 \mathcal{C}_K(M,E) \cong \det \mathcal{C}_K(M,H_{\partial}(E)),
\end{align*}
where the norm on the latter line comes from some non-vanishing flat section of the bundle $\det(H_{\partial}(E)\oplus H_{\partial^*}(E^*))$ with
flat connection $\det([\nabla]\oplus [\nabla^*])$.
Since $\phi_0$ induces an isomorphism of vector bundles with flat connections
\begin{align*}
 \det(H_{\partial}(E)\oplus H_{\partial^*}(E^*)) \cong \det(H_{\partial}(E')\oplus H_{\partial^*}((E')^*)),
\end{align*}
the isomorphism
\begin{align*}
\det \mathcal{E}_1\mathcal{C}_K(M,E) \to \det \mathcal{E}_1\mathcal{C}_K(M,E')
\end{align*}
obtained above is compatible with the norms. 
Hence so is
\begin{align*}
\det [\phi] \otimes \det [\phi^*]^{-1}: \det H(M,E) \otimes \det H(M,E^*) \to \det H(M,E') \otimes \det H(M,(E')^*). 
\end{align*}
Together with the commutativity of
\begin{align*}
\xymatrix{
\det H(M,(E')^*) \ar[r]^{[\phi^*]} \ar[d] & \det H(M,E^*) \ar[d]\\
\det H(M,E') & \ar[l]_{[\phi]} \det H(M,E),
}
\end{align*}
where the vertical arrows are given by Poincar\'e duality, this implies that
\begin{align*}
\det [\phi]: \det H(M,E) \to \det H(M,E')
\end{align*}
is compatible with the norms given by the Reidemeister torsion.
\end{proof}

\begin{lemma}
Let $M$ be a closed, orientable manifold of odd dimensions and $E$ and $E'$ two $\mathbb{Z}_2$-graded vector bundles equipped with flat superconnections.

\begin{enumerate}
\item The isomorphism
\begin{align*}
\det {\cal D}:\det H(M,E) \to \det H(M,E^*)
\end{align*}
induced by Poincar\'e duality maps the Reidemeister torsion of $E$ to the Reidemeister torsion of $E^*$.

\item The natural isomorphism
\begin{align*}
 \det H(M,E\oplus F) \to \det H(M,E) \otimes \det H(M,F)
\end{align*}
maps the Reidemeister torsion of $E\oplus F$ to the product of the Reidemeister torsion for $E$ and $F$, respectively.
\end{enumerate}
\end{lemma}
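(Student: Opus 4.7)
The plan is to unpack Definition \ref{definition:torsion} in both cases, exhibit explicit isometries of the normed determinant lines already at the level of the finite complex $\mathcal{C}_K(M,-)$, and then propagate these isometries through the Poincar\'e-duality identification that enters the definition of $\tau_R$. In both parts, the entire construction $E \mapsto (\mathcal{C}_K(M,E), \tau^\mu)$ is manifestly natural in the data, so the only non-routine point is that the Poincar\'e-duality identifications line up correctly.

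For part (1), I start from the canonical swap isomorphism
\[
\sigma: \mathcal{C}_K(M,E) = C(M^K,E)\oplus C(M^K,E^*) \;\longrightarrow\; C(M^K,E^*)\oplus C(M^K,E^{**}) = \mathcal{C}_K(M,E^*),
\]
which interchanges the two summands and uses the canonical identification $E \cong E^{**}$. Under $\sigma$, a flat non-vanishing section $\mu$ of $\det(E \oplus E^*)$ transfers to a flat non-vanishing section $\sigma_\ast \mu$ of $\det(E^* \oplus E^{**})$, so by the very definition of $\tau^{\mu}$ the map $\det\sigma$ is an isometry from $(\det \mathcal{C}_K(M,E), \tau^\mu)$ to $(\det \mathcal{C}_K(M,E^*), \tau^{\sigma_\ast\mu})$. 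It then remains to check that the diagram
\[
\xymatrix{
\det \mathcal{C}_K(M,E)\ar[r]^-{T_K} \ar[d]_-{\det \sigma} & (\det H(M,E))^{\otimes 2} \ar[d]^-{(\det \mathcal{D})^{\otimes 2}} \\
\det \mathcal{C}_K(M,E^*) \ar[r]^-{T_{K}'} & (\det H(M,E^*))^{\otimes 2}
}
\]
commutes (up to sign). This reduces to the statement that the Poincar\'e-duality isomorphism for $E^*$ is the dual-and-inverse of the one for $E$, a standard consequence of the symmetry of the pairing $\int_M \langle -,-\rangle$. Combined with the naturality of the diagonal $x\mapsto x\otimes x$, this yields $\tau_R^{E^*}(\det \mathcal{D}(x)) = \tau_R^E(x)$.

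For part (2), I use additivity of the complex,
\[
\mathcal{C}_K(M, E \oplus F) \cong \mathcal{C}_K(M,E) \oplus \mathcal{C}_K(M,F),
\]
which passes to a tensor product on determinant lines, together with the canonical isomorphism
\[
\det\!\bigl((E \oplus F)\oplus (E \oplus F)^*\bigr) \cong \det(E \oplus E^*) \otimes \det(F \oplus F^*).
\]
Choosing $\mu_{E\oplus F}:=\mu_E \otimes \mu_F$ as the flat non-vanishing section for $E \oplus F$, the norm $\tau^{\mu_{E\oplus F}}$ factors as $\tau^{\mu_E}\otimes \tau^{\mu_F}$. Since Poincar\'e duality is additive, $\mathcal{D}_{E\oplus F} = \mathcal{D}_E \oplus \mathcal{D}_F$, and the diagonal on the tensor product factors as the product of the diagonals on each factor, up to the Koszul sign coming from the swap of middle tensor factors. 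Putting everything together yields the claim $\tau_R^{E\oplus F} = \tau_R^E \cdot \tau_R^F$ under the natural identification of determinant lines.

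The main technical obstacle in both parts is the careful bookkeeping of Koszul signs that arise when swapping tensor factors of $\mathbb{Z}_2$-graded objects, when using the canonical iso $E \cong E^{**}$, and when invoking the symmetry of Poincar\'e duality. Since the Reidemeister torsion is only a norm, none of these signs affect the final equality; but verifying that the relevant diagrams commute up to sign is what requires genuine care, rather than the statement being a one-line consequence of functoriality.
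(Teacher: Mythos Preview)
Your proposal is correct and follows essentially the same approach as the paper. For part (1) the paper phrases your swap $\sigma$ as the literal equality $\mathcal{C}_K(M,E)=\mathcal{C}_K(M,E^*)$ and then, instead of your square, runs a commutative triangle with middle vertex $\det H(M,E)\otimes\det H(M,E^*)$ and legs $\id\otimes\det\mathcal{D}$, $\det\mathcal{D}\otimes\id$ (both isometries ``by construction'') to deduce that $\det\mathcal{D}\otimes\det\mathcal{D}$, hence $\det\mathcal{D}$, is an isometry; for part (2) the paper simply declares the claim ``immediate from the construction,'' matching your additivity argument.
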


\begin{proof}
In order to prove the first statement we choose a triangulation $(K,\phi)$ together with a total ordering of the vertex set and observe that:

\[\C_K(M,E) = C(M^K,E)\oplus C(M^K,E^*)= \C_K(M,E^*)\]

This shows that the metric induced on $\det H(M,E) \otimes \det H(M,E^*)$ by the Reidemeister torsion of $E$ coincides with the 
metric induced on $\det H(M,E) \otimes \det H(M,E^*)$ by the Reidemeister torsion of $E^*$. 

Let us denote by $\det \cal{D}$ the isomorphism
\[ \det {\cal D}: \det H(M,E) \rightarrow \det H(M,E^*)\]
induced by Poincar\'e duality. Consider the commutative diagram

\begin{align*}
\xymatrix{
\det H(M,(E)) \otimes \det H(M,E) \ar[rrd]^{\,\,\,\,\,\,\,\,\,\,\,\,\,\,\,\, \id \otimes \det {\cal D}} \ar[dd]_{ \det {\cal D}\otimes  \det {\cal D}} & & \\
&& \det H(M,E)\otimes \det H(M,E^*) \ar[dll]^{\det {\cal D}\otimes \id }\\
\det H(M,E^*)\otimes H(M,E^*) && ,
}
\end{align*}
where we give $ \det H(M,E) \otimes \det H(M,E)$  and $\det H(M,E^*) \otimes \det H(M,E^*)$ the norms corresponding to the Reidemeister
torsions of $E$ and $E^*$, respectively. Then, by construction, the maps $\id \otimes \det {\cal D}$ and $\det {\cal D} \otimes \id$ are maps of
normed vector spaces. Since the diagram is commutative we conclude that so is $\det {\cal D} \otimes \det{\cal D}$. This clearly implies that:
\[\det {\cal D}: \det H(M,E)\rightarrow \det H(M,E^*)\]
preserves the norm. The second claim is immediate from the construction.
\end{proof}

\appendix

\section{Homological algebra of determinant lines}\label{appendix1}

Here we collect some well-known properties of determinant lines of graded vector spaces.
In the following all the $\mathbb{Z}_2$-graded vector spaces under consideration are assumed to be finite dimensional.
If $V$ is a $\mathbb{Z}_2$-graded vector space, $\Pi V$ denotes the $\mathbb{Z}_2$-graded vector space
given by $V$ with parity reversed, i.e. $(\Pi V)^{\overline{0}} = V^{\overline{1}}$ and $(\Pi V)^{\overline{1}} = V^{\overline{0}}$.

\begin{definition}
The determinant line $\det V$ of a $\mathbb{Z}_2$-graded vector space $V$ is the one dimensional vector space
\begin{align*}
\det V := \wedge^{\mathrm{top}} V^{\overline{0}} \otimes \wedge^{\mathrm{top}} (V^{\overline{1}})^{*}.
\end{align*}
\end{definition}

\begin{lemma}
The lines  $\det(V)$ and $\det(\Pi V^*)$ are the same.
\end{lemma}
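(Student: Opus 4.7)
The plan is to unwind the definitions and observe that both sides coincide after applying the standard identifications $(W^*)^* \cong W$ for a finite-dimensional vector space $W$, together with the commutativity of tensor products of one-dimensional vector spaces (which holds up to sign, and hence is an equality under the conventions adopted in the paper).

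\medskip

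\noindent\textbf{Step 1: Compute the graded components of $\Pi V^*$.} Since $V^* = (V^{\overline{0}})^* \oplus (V^{\overline{1}})^*$ with the natural $\mathbb{Z}_2$-grading $(V^*)^{\overline{i}} = (V^{\overline{i}})^*$, and since the parity reversal functor $\Pi$ swaps the two components, we obtain
\begin{align*}
(\Pi V^*)^{\overline{0}} &= (V^{\overline{1}})^*, &
(\Pi V^*)^{\overline{1}} &= (V^{\overline{0}})^*.
\end{align*}

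\noindent\textbf{Step 2: Apply the definition of the determinant line.} Substituting into the defining formula gives
\begin{align*}
\det(\Pi V^*) = \wedge^{\mathrm{top}}(\Pi V^*)^{\overline{0}} \otimes \wedge^{\mathrm{top}}\bigl((\Pi V^*)^{\overline{1}}\bigr)^*
= \wedge^{\mathrm{top}}(V^{\overline{1}})^* \otimes \wedge^{\mathrm{top}}\bigl((V^{\overline{0}})^*\bigr)^*.
\end{align*}

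\noindent\textbf{Step 3: Conclude.} The canonical double-dual identification for a finite-dimensional vector space yields $\wedge^{\mathrm{top}}\bigl((V^{\overline{0}})^*\bigr)^* \cong \wedge^{\mathrm{top}} V^{\overline{0}}$, and the commutativity of tensor products of lines (which is an equality up to sign and hence an equality under our conventions) gives
\begin{align*}
\det(\Pi V^*) \cong \wedge^{\mathrm{top}} V^{\overline{0}} \otimes \wedge^{\mathrm{top}}(V^{\overline{1}})^* = \det V.
\end{align*}

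\medskip

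There is really no obstacle here: the statement is a direct verification from the definition. The only subtlety worth noting is the implicit use of the sign conventions stated in the introduction, which lets us treat the swap in the tensor product as an identification rather than merely a canonical isomorphism up to sign. The lemma is included mainly as a bookkeeping device to justify freely passing between $\det V$ and $\det(\Pi V^*)$ in later computations involving Poincar\'e duality, where one side may be more natural than the other.
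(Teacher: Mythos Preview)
Your proof is correct and is exactly the direct verification one would expect; the paper itself states this lemma without proof, treating it as an immediate consequence of the definition. There is nothing to compare beyond noting that your unwinding of the definitions is the obvious (and only reasonable) argument.
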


\begin{lemma}
Let 
\begin{align*}
 \xymatrix{
0\ar[r] & U \ar[r] & V \ar[r] & W \ar[r] & 0
}
\end{align*}

be a short exact sequence of $\mathbb{Z}_2$-graded vector spaces.

Then there is a natural isomorphism
\begin{align*}
 \det(V) \cong \det(U)\otimes \det(W).
\end{align*}
\end{lemma}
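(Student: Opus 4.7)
The plan is to reduce the $\mathbb{Z}_2$-graded statement to the classical ungraded one by splitting the short exact sequence into its even and odd parts, applying the ungraded determinant-line identity in each parity, and then reassembling the factors using the definition $\det V = \wedge^{\mathrm{top}} V^{\overline 0} \otimes \wedge^{\mathrm{top}} (V^{\overline 1})^*$.

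First I would observe that, since the maps in the sequence are by definition parity-preserving, the sequence splits as two short exact sequences of ordinary vector spaces,
\begin{align*}
0 \to U^{\overline 0} \to V^{\overline 0} \to W^{\overline 0} \to 0, \qquad 0 \to U^{\overline 1} \to V^{\overline 1} \to W^{\overline 1} \to 0.
\end{align*}
For each of these I would invoke the standard ungraded isomorphism $\wedge^{\mathrm{top}} V^{\overline{i}} \cong \wedge^{\mathrm{top}} U^{\overline{i}} \otimes \wedge^{\mathrm{top}} W^{\overline{i}}$, which one constructs concretely by picking a basis of $U^{\overline{i}}$, lifting a basis of $W^{\overline{i}}$ to $V^{\overline{i}}$, and sending the wedge of the concatenated basis to the tensor product of the two top wedges. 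Independence of the lift is the classical fact that any two choices of lift differ by something landing in $U^{\overline{i}}$, which produces only lower-order terms in the top wedge.

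For the odd part I would then dualize, using that dualization converts short exact sequences of vector spaces into short exact sequences (with arrows reversed), so
\begin{align*}
\wedge^{\mathrm{top}} (V^{\overline 1})^* \cong \wedge^{\mathrm{top}} (U^{\overline 1})^* \otimes \wedge^{\mathrm{top}} (W^{\overline 1})^*.
\end{align*}
Tensoring the even identity with the dualized odd identity gives
\begin{align*}
\det V \cong \bigl(\wedge^{\mathrm{top}} U^{\overline 0} \otimes \wedge^{\mathrm{top}} W^{\overline 0}\bigr) \otimes \bigl(\wedge^{\mathrm{top}} (U^{\overline 1})^* \otimes \wedge^{\mathrm{top}} (W^{\overline 1})^*\bigr),
\end{align*}
and a permutation of the four tensor factors regroups this as $(\det U) \otimes (\det W)$.

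There is essentially no hard step: the only thing to be mindful of is the sign picked up when the tensor factors are permuted, but this is absorbed by the blanket convention in the paper that linear isomorphisms are only considered up to sign. Naturality with respect to morphisms of short exact sequences follows immediately, since each of the ingredient isomorphisms (the ungraded determinant identity, linear duality, and the symmetry of tensor products) is itself natural.
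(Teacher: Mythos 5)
Your proof is correct; the paper states this lemma without proof, listing it among ``well-known properties of determinant lines,'' and your argument --- splitting the sequence into its even and odd parity components, applying the classical ungraded identity $\wedge^{\mathrm{top}} V \cong \wedge^{\mathrm{top}} U \otimes \wedge^{\mathrm{top}} W$ to each, dualizing the odd part, and regrouping the factors up to sign --- is exactly the standard argument the authors implicitly rely on. The only point requiring care, the sign from permuting tensor factors, is correctly dispatched by the paper's stated convention that isomorphisms of linear objects are considered up to sign.
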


\begin{lemma}
Let $(C,d)$ be a $\mathbb{Z}_2$-graded complex and $H(C)$ its cohomology. There is a natural isomorphism
\begin{align*}
\det(C) \cong \det(H(C)).
\end{align*}
\end{lemma}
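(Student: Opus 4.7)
The plan is to reduce the claim to the short exact sequence lemma and the earlier identity $\det(V) \cong \det(\Pi V^{*})$, by cutting the complex along its cocycles and coboundaries. More precisely, let $Z(C)$ and $B(C)$ denote the $\mathbb{Z}_2$-graded subspaces of cocycles and coboundaries of $(C,d)$, respectively. I would first record the two standard short exact sequences
\begin{align*}
\xymatrix{
0 \ar[r] & Z(C) \ar[r] & C \ar[r]^-{d} & \Pi B(C) \ar[r] & 0,
}
\end{align*}
coming from the fact that $d: C \to C$ is odd and has image $B(C)$, and
\begin{align*}
\xymatrix{
0 \ar[r] & B(C) \ar[r] & Z(C) \ar[r] & H(C) \ar[r] & 0.
}
\end{align*}

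Applying the preceding lemma on short exact sequences to each of these yields natural isomorphisms $\det C \cong \det Z(C) \otimes \det \Pi B(C)$ and $\det Z(C) \cong \det B(C) \otimes \det H(C)$. Combining these two gives
\begin{align*}
\det C \cong \det B(C) \otimes \det H(C) \otimes \det \Pi B(C).
\end{align*}

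To conclude it then suffices to produce a canonical trivialisation of $\det B(C) \otimes \det \Pi B(C)$. For any $\mathbb{Z}_2$-graded vector space $V$ one checks directly from the definition of the determinant line that $\det V \otimes \det \Pi V \cong \wedge^{\mathrm{top}} V^{\overline{0}} \otimes \wedge^{\mathrm{top}}(V^{\overline{0}})^{*} \otimes \wedge^{\mathrm{top}} V^{\overline{1}} \otimes \wedge^{\mathrm{top}}(V^{\overline{1}})^{*}$, which is canonically trivial via the evaluation pairings; equivalently, this is an instance of the identity $\det(\Pi V) \cong \det(V)^{*}$ implicit in the lemma $\det V \cong \det \Pi V^{*}$ recorded above. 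Applying this with $V = B(C)$ finishes the chain of natural isomorphisms $\det C \cong \det H(C)$.

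The only real subtlety is checking that the isomorphism obtained is indeed canonical, i.e. independent of the auxiliary splittings one might implicitly use when invoking the short exact sequence lemma, and that the sign ambiguities are absorbed into the paper's convention that all such isomorphisms are considered up to sign (see the Conventions at the start of the paper). I would verify this by noting that both short exact sequences above are defined purely in terms of the differential $d$, with no choice involved, so the composite isomorphism depends only on $(C,d)$; the cancellation $\det B(C) \otimes \det \Pi B(C) \cong \mathbb{R}$ is likewise canonical (up to sign) via the evaluation pairing. This is the step that requires the most care, but it is standard and matches the treatment of the analogous exact sequences already used in the proof of Lemma \ref{lemma:fiber_cohomology}.
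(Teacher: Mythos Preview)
Your proof is correct and follows exactly the paper's approach: the paper's proof (given as the remark immediately after the lemma) reduces the claim to the short exact sequence lemma via the same two sequences $0 \to B \to Z \to H \to 0$ and $0 \to Z \to C \to \Pi B \to 0$. Your additional discussion of the trivialisation $\det B \otimes \det \Pi B \cong \mathbb{R}$ and of naturality fills in details the paper leaves implicit, but the strategy is identical.
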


\begin{remark}
This Lemma follows from the previous one by considering the following two short exact sequences:
\begin{align*}
 \xymatrix{
0 \ar[r] & B \ar[r] & Z \ar[r] & H \ar[r] & 0 \\
0 \ar[r]  & Z \ar[r] & C \ar[r]^{d} & \Pi B \ar[r] & 0,
}
\end{align*}
where $Z$ and $B$ denote the $\mathbb{Z}_2$-graded vector spaces given by the cocycles and coboundaries of $(C,d)$, respectively.
\end{remark}

\begin{lemma}
Let $V$ be a $\mathbb{Z}_2$-graded vector space with a bounded filtration
\begin{align*}
V \supset \cdots \supset F_pV \supset F_{p+1}V \supset \cdots \supset 0.
\end{align*}
Then there is a natural isomorphism
\begin{align*}
\det(V) \cong \det(GV),
\end{align*}
between the determinant line of $V$ and the determinant line of the associated graded $GV$ of $V$.
\end{lemma}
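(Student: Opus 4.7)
The plan is to reduce the statement to the previous lemma, which provides a natural isomorphism $\det(V) \cong \det(U) \otimes \det(W)$ for every short exact sequence $0 \to U \to V \to W \to 0$, and then iterate.

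First, I would observe that since the filtration is bounded, there exist integers $m \le n$ such that $F_mV = V$ and $F_{n+1}V = 0$, so only finitely many subquotients $G_pV := F_pV/F_{p+1}V$ are nonzero. For each $p$ in this range one has the tautological short exact sequence of $\mathbb{Z}_2$-graded vector spaces
\begin{align*}
0 \to F_{p+1}V \to F_pV \to G_pV \to 0,
\end{align*}
to which the previous lemma applies and yields a natural isomorphism $\det(F_pV) \cong \det(F_{p+1}V) \otimes \det(G_pV)$.

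Next I would proceed by descending induction on the length of the filtration (equivalently, by induction on $n-p$). The base case is $F_{n+1}V = 0$, for which $\det(F_{n+1}V)$ is canonically the ground field and the claim is trivial. For the inductive step, I combine the isomorphism above with the inductive hypothesis applied to the induced filtration on $F_{p+1}V$ to obtain
\begin{align*}
\det(F_pV) \cong \det(F_{p+1}V) \otimes \det(G_pV) \cong \Bigl(\bigotimes_{q>p} \det(G_qV)\Bigr) \otimes \det(G_pV) \cong \bigotimes_{q \ge p} \det(G_qV).
\end{align*}
Taking $p = m$ gives $\det(V) \cong \bigotimes_q \det(G_qV) = \det(GV)$, as desired.

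The only subtle point is naturality: one should check that the composite isomorphism does not depend on the order in which the short exact sequences are processed and is functorial with respect to filtration-preserving maps. This follows from the naturality of the short-exact-sequence isomorphism from the previous lemma, together with the standard compatibility expressing that for a three-step filtration the two ways of applying the lemma agree. Since these are the only technicalities, there is no serious obstacle; the main content is simply iterating the short exact sequence lemma along the filtration.
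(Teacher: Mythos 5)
Your argument is correct: iterating the short-exact-sequence lemma along the finitely many steps $0 \to F_{p+1}V \to F_pV \to G_pV \to 0$ of the bounded filtration, together with the identification $\det(GV) \cong \bigotimes_q \det(G_pV)$, is the standard proof, and the ordering/naturality subtleties you flag are moot under the paper's stated convention that isomorphisms of determinant lines are only considered up to sign. The paper states this lemma without proof, listing it among well-known facts, so your route is exactly the expected one (and mirrors the paper's own sketch for the preceding lemma on $\det(C)\cong\det(H(C))$, which likewise just applies the short-exact-sequence lemma twice).
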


Let $(C,d)$ be a $\mathbb{Z}_2$-graded complex with a compatible filtration $(F_pC)$
\begin{align*}
C \supset \cdots \supset F_pC \supset F_{p+1}C \supset \cdots \supset 0,
\end{align*}
which is bounded. We denote the $k$'th sheet of the associated spectral sequence by $\mathcal{E}_kC$.
Observe that $\mathcal{E}_1C$ is the cohomology $H(GC)$ of the associated graded $GC$ of $C$. Moreover, since the filtration
is bounded, $\mathcal{E}_{\infty}C$ is isomorphic to the associated graded $G H(C)$ of the cohomology
$H(C)$ of $C$ -- the latter equipped with the filtration inherited from $C$.

\begin{proposition}\label{proposition:spectral_sequence}
Let $(C,d)$ be a finite dimensional $\mathbb{Z}_2$-graded complex with a bounded filtration $(F_pC)$.
Then the diagram of natural isomorphisms
\begin{align*}
\xymatrix{
\det(C) \ar[rrr] \ar[d] &&& \det(H(C)) \ar[d]\\
\det(GC) \ar[d] &&& \det(G H(C)) \ar[d]\\
\det(H(GC))=\det(\mathcal{E}_1C) \ar[r] & \det(\mathcal{E}_2 C) \ar[r] & \cdots \ar[r] & \det \mathcal{E}_{\infty}C
}
\end{align*}
commutes up to sign.
\end{proposition}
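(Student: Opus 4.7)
The approach is to proceed by induction on the length $n$ of the filtration, where $n$ is the smallest integer with $F_n C = 0$. The key observation is that each horizontal isomorphism on the bottom row, $\det(\mathcal{E}_r C) \cong \det(\mathcal{E}_{r+1} C)$, is an instance of the basic complex isomorphism $\det(X) \cong \det(H(X))$ applied to the page $(\mathcal{E}_r C, d_r)$; similarly, $\det(GC) \cong \det(\mathcal{E}_1 C)$ is the same isomorphism applied to $(GC, d_0)$. Thus, both the left column and the entire bottom row are chained applications of a single natural construction, so the proposition becomes a statement about compatibility of repeated ``take cohomology'' steps with a single ``take cohomology'' step followed by passing to the associated graded.

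For the base case $n = 1$, the filtration is trivial, so $GC = C$, $\mathcal{E}_0 C = C$, and $\mathcal{E}_r C = H(C)$ for all $r \geq 1$. Both paths through the diagram reduce to the standard isomorphism $\det(C) \cong \det(H(C))$, so commutativity holds trivially. For the inductive step with $n \geq 2$, I would exploit the short exact sequence of filtered complexes
\[
0 \to F_1 C \to C \to C/F_1 C \to 0,
\]
where $F_1 C$ inherits a filtration of length $n - 1$ and $C/F_1 C$ carries the trivial (length one) filtration. By the short exact sequence lemma for determinant lines, each vertex in the diagram for $C$ decomposes as a tensor product of the corresponding vertices for $F_1 C$ and $C/F_1 C$, and the spectral sequence of $C$ is assembled from those of $F_1 C$ and $C/F_1 C$ linked by the connecting homomorphism $\delta$ of the short exact sequence. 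The commutativity for $C$ then follows by combining the inductive hypothesis on $F_1 C$ with the base case on $C/F_1 C$.

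The main obstacle is verifying that the connecting homomorphism $\delta$ plays its two distinct roles compatibly: as (part of) the first non-trivial differential $d_r$ in the spectral sequence of $C$, and as the map appearing implicitly in the identification of $\det H(C)$ with $\det H(F_1 C) \otimes \det H(C/F_1 C)$ coming from the long exact sequence of cohomology. Careful bookkeeping at the level of cycles, boundaries and subquotients at each page of the spectral sequence is needed so that these two occurrences of $\delta$ match up to sign under the canonical identifications of determinant lines. As a concrete fallback, one may instead choose a basis of $C$ adapted simultaneously to the filtration and to the nested sequence of cycles and boundaries $B_r \subseteq Z_r$ defining each page, and then verify the commutativity of every individual square in the diagram by tracking the induced isomorphisms of top exterior powers in coordinates.
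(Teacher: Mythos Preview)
The paper does not supply its own proof of this proposition; it simply refers the reader to Maumary and Freed. So there is no in-paper argument to compare against, and your proposal must be judged on its own merits.

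Your inductive scheme is natural, and the base case is indeed trivial. The inductive step, however, is not as clean as you suggest. The phrase ``the spectral sequence of $C$ is assembled from those of $F_1 C$ and $C/F_1 C$ linked by the connecting homomorphism $\delta$'' hides the real difficulty. For $p\ge 1$ the identification $\mathcal{E}_r^{p,\bullet}(C)\cong \mathcal{E}_r^{p,\bullet}(F_1 C)$ only holds while $r\le p$; once $r>p$ the subquotient defining $\mathcal{E}_r^{p,\bullet}(C)$ involves $F_{p-r+1}C=C$, and the page for $C$ is no longer the page for $F_1 C$ in that column. Concretely, the differentials $d_r:\mathcal{E}_r^{0,\bullet}\to\mathcal{E}_r^{r,\bullet}$ for successive $r$ are not ``the connecting homomorphism $\delta$'' but rather a sequence of partial approximations to it, each landing in a different column of the $F_1 C$-spectral sequence and altering the subsequent pages there. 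So the spectral sequence of $F_1 C$ does not survive unchanged inside that of $C$, and you cannot simply tensor the diagram for $F_1 C$ with the diagram for $C/F_1C$ and invoke the inductive hypothesis. Making this precise forces you to track exactly how the cross-differentials modify the determinant lines at each page, which is essentially the full content of the proposition rather than a reduction of it.

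Your fallback---choosing a basis of $C$ adapted simultaneously to the filtration and to the nested $B_r\subseteq Z_r$ at every page, then checking each square in coordinates---is in fact the standard route (it is essentially what Freed does). If you pursue that line it will go through; the inductive argument as stated does not close without substantially more work, and that extra work amounts to reproducing the basis argument in disguise.
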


\begin{remark}
Proofs of this statement can be found in \cite{Maumary} or \cite{Freed}.
\end{remark}

\section{Duality of spectral sequences}

Here we will prove some lemmas that are be useful in showing the compatibility of the Poincar\'e duality
isomorphism with the canonical isomorphism between the determinant of a (finite dimensional) complex
and its cohomology. 

\begin{lemma}\label{easypairing}
Let $A$ and $B$ be finite dimensional $\mathbb{Z}_2$-graded complexes and assume that there is a parity $\overline{n}$ pairing
\[\langle -,- \rangle :A \otimes B \rightarrow \mathbb{R},\]
which is nondegenerate and such that
\[\langle \partial a , b \rangle = -(-1)^{|a|} \langle a , \partial b \rangle \]
holds.

Then, the pairing induced in cohomology is nondegenerate and, if the parity of the pairing is odd, the diagram
\begin{align*}
\xymatrix{
\det A \ar[r]^{\cong} \ar[d]^{\cong} & \det B \ar[d]^{\cong}\\
\det H(A) \ar[r]^{\cong}& \det H(B)
}
\end{align*}
commutes. Here the horizontal isomorphisms are induced by the pairing and the vertical ones are the canonical isomorphisms.
\end{lemma}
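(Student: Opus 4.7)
The plan is to reduce the claim to a naturality property of the canonical isomorphism $\det C \cong \det H(C)$ from the previous subsection.

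The non-degenerate pairing of parity $\overline{n}$ defines a map $\Phi : B \to \Pi^n A^*$ by $\Phi(b)(a) := \langle a, b \rangle$; non-degeneracy makes $\Phi$ an isomorphism of $\mathbb{Z}_2$-graded vector spaces, and the compatibility relation $\langle \partial a , b \rangle = -(-1)^{|a|} \langle a, \partial b \rangle$ is, up to a universal sign, precisely the statement that $\Phi$ intertwines $\partial_B$ with the dual differential on $\Pi^n A^*$. Consequently $\Phi$ descends to an isomorphism $H(\Phi) : H(B) \to H(\Pi^n A^*) = \Pi^n H(A)^*$, and the induced pairing on cohomology is non-degenerate. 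This handles the first half of the statement.

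Assume now that $n$ is odd, so $\Pi^n = \Pi$. I would factor the two horizontal arrows of the diagram through $\Phi$ and the canonical identification $\det V \cong \det \Pi V^*$ supplied by the second lemma of the appendix: the top arrow becomes
\begin{equation*}
\det A \;\cong\; \det \Pi A^* \;\xrightarrow{(\det \Phi)^{-1}}\; \det B,
\end{equation*}
and the bottom arrow becomes
\begin{equation*}
\det H(A) \;\cong\; \det \Pi H(A)^* \;=\; \det H(\Pi A^*) \;\xrightarrow{(\det H(\Phi))^{-1}}\; \det H(B).
\end{equation*}
The original square then splits into two subsquares: one relating $\det(\cdot) \cong \det H(\cdot)$ to the isomorphism of complexes $\Phi$, and one relating $\det(\cdot) \cong \det H(\cdot)$ to the identification $V \mapsto \Pi V^*$. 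The first subsquare commutes by functoriality of the canonical isomorphism $\det C \cong \det H(C)$ under isomorphisms of complexes, while the second commutes by the naturality of $\det V \cong \det \Pi V^*$ under passage to cohomology. Both statements are immediate consequences of the construction of $\det C \cong \det H(C)$ via the short exact sequences $0 \to B(C) \to Z(C) \to H(C) \to 0$ and $0 \to Z(C) \to C \to \Pi B(C) \to 0$, applied simultaneously to $C = A$ and $C = \Pi A^*$ and to the pointwise duals of these sequences.

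The main obstacle I foresee is sign bookkeeping: the canonical identifications $\det V = \det \Pi V^*$ and $\det C \cong \det H(C)$ are only well-defined up to sign (as made explicit in the Conventions of the paper), and the compatibility of $\Phi$ with the differentials involves a further universal sign coming from the odd parity of the pairing. I would therefore state and verify the diagram only up to a global sign, which is consistent with the sign convention adopted throughout the paper, and refrain from chasing the precise signs in each subsquare.
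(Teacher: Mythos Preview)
Your argument is correct and, for the commutativity of the diagram, follows essentially the same route as the paper: both factor the horizontal maps through the isomorphism of complexes induced by the pairing (you use $\Phi: B \to \Pi A^*$, the paper uses the equivalent $A \to \Pi B^*$) and then invoke naturality of $\det C \cong \det H(C)$ together with the compatibility of dualization with passage to cohomology.

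The one genuine difference is in the proof of non-degeneracy of the induced pairing on cohomology. You observe directly that $\Phi$ is an isomorphism of complexes and hence induces an isomorphism $H(B) \cong \Pi H(A)^*$, which is the cleanest possible argument. The paper instead chooses a metric on $A$, takes the Hodge decomposition $A = \mathcal{H}(A) \oplus \partial(A) \oplus \partial^*(A)$ (and the induced one on $B$), and checks that the harmonic subspaces pair non-degenerately with each other. Your route is more elementary and avoids the auxiliary choice of metric; the paper's route has the mild advantage of exhibiting explicit representatives (harmonic forms) realizing the duality, but for the purposes of this lemma that is not needed.
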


\begin{proof}
First let us prove that the pairing in cohomology is nondegenerate. For this we choose a metric on $A$ which gives a Hodge decomposition
\[A={\cal{H}}(A)\oplus \partial (A) \oplus \partial^* (A),\]
where ${\cal H}(A)$ denotes the space of harmonic elements.
The corresponding metric on $B$ gives a decomposition
\[B={\cal{H}}(B)\oplus \partial (B) \oplus \partial^* (B).\]
Since the metric on $B$ is the one induced by that on $A$ we know that:
\[\langle {\cal{H}}(A) , \partial(B)\oplus \partial^*(B)\rangle =0,\]
and
\[\langle {\cal{H}}(B) , \partial(A)\oplus \partial^*(A)\rangle =0.\]
But since the pairing is nondegenerate we conclude that the pairing between ${\cal H}(A)$ and
${\cal H}(B)$ is nondegenerate. This implies that the pairing in cohomology is also nondegenerate.

In order to prove that the diagram above commutes, it suffices to observe that the diagram

\begin{align*}
\xymatrix{
\det A \ar[r]^{\cong} \ar[d]_{\cong} & \det \Pi B^* \ar[d]^{\cong} \\
\det H(A) \ar[r]^{\cong}& \det H(\Pi B^*).
}
\end{align*}
commutes. This is true because the isomorphisms between $\det A$ and $\det H(A)$ (respectively for $B$ and $H(B)$) are canonical. 
The rest follows from the observation
that the diagram
\begin{align*}
\xymatrix{
\det B^* \ar[r]^{=} \ar[d]_{\cong}& (\det B)^* \ar[d]^{\cong}\\
\det H(B^*) \ar[r]^{\cong} & (\det H(B))^*
}
\end{align*}
commutes as well.
\end{proof}

\begin{lemma}\label{lemmacompatibility}
Let $A = (A^{k,\overline{p}})$ and $B=(B^{k,\overline{p}})$
be $\mathbb{Z}_2$-graded complexes with an additional $\mathbb{Z}$-grading bounded between $0$ and $n\ge 0$.
Assume further that $A$ and $B$ have finite dimensional cohomology
and that the respective differentials preserve the filtrations $F_p(A)$ and $ \tilde{F}_p(B)$ given by
\[F_p(A)=\bigoplus_{k\geq p}A^{k,\bullet}, \quad \tilde{F}_p(B)=\bigoplus_{k\geq p}B^{k,\bullet}.\]

Let
\[\langle \, , \, \rangle: A\otimes B \rightarrow \mathbb{R} ,\]
be a pairing of bidegree $(-n,\overline{0})$
such that
\begin{enumerate}
\item The differentials are skew-adjoint with respect to the pairing, namely: \[\langle \partial a, b \rangle =-(-1)^{|a|}\langle a, \partial b \rangle.\]
\item The induced pairing in cohomology 
\[\langle - , - \rangle:H (A)\otimes H(B)
\rightarrow \mathbb{R} \]
is nondegenerate. 
\end{enumerate}
Consider the spectral sequences $\mathcal{E}_r$ and $\tilde{\mathcal{E}}_r$ associated to the filtrations $F_p(A)$ and $\tilde{F}_p(B)$. Then:
\begin{enumerate}
\item For $r\geq 0$ there is a pairing of bidegree $(-n,\overline{0})$:
\[ \langle \, ,\, \rangle :\mathcal{E}_r \otimes \tilde{\mathcal{E}}_r \rightarrow \mathbb{R},\]
given by the formula
\[ \langle [a],[b]\rangle :=\begin{cases}
\langle a,b\rangle \text{ if } [a] \in E^{p,\overline{q}}_r \text{ and } [b]\in E^{n-p,\overline{q}}_r,\\
0 \text{ otherwise. }
\end{cases}\]

\item For each $r$ the pairing makes the differentials $\partial_r$ on the $r$-th sheets skew-adjoint, namely:
 \[\langle \partial_r [a], [b] \rangle = -(-1)^{|a|}\langle [a], \partial_r [b] \rangle.\]
\item The pairing between the $(r+1)$-th pages is the one induced in cohomology by the paring between the page $r$-th pages.
\item If there exists  $m\geq 0$ such that the $m$-th pages  are finite dimensional and the paring between them is nondegenerate then the same is true for $r \geq m$.
\item Assume that $n$ is odd, the $r$-th pages are finite dimensional and the pairing between them is nondegenerate.
If the filtrations induced in cohomology are compatible in the sense that
\[\tilde{F}_p(H(B))=\{[b] \in H(B): \langle [a],[b]\rangle=0 \text{ for } [a]\in F_{n-p+1}(H(A))\},\]
then the following diagram commutes
\begin{align*}
\xymatrix{
\det \mathcal{E}_r \ar[r]^{\cong} \ar[d]_{\cong} & \det \tilde{\mathcal{E}}_r \ar[d]^{\cong}\\
\det H(A) \ar[r]^{\cong}& \det H(B).
}
\end{align*}
Here the vertical isomorphisms are the canonical isomorphisms, while the horizontal ones are induced by the pairing.
\end{enumerate}
\end{lemma}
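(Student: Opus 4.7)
\medskip

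\noindent\textbf{Proof proposal.} The plan is to dispose of parts (1)--(4) by routine checks and then attack (5), which carries all the real content. Throughout, the key observation used for (1) and (2) is that the bidegree $(-n,\overline{0})$ of the pairing forces $\langle x,y\rangle=0$ whenever $x\in F_p A$ and $y\in \tilde F_q B$ with $p+q>n$. This immediately shows that the formula in (1) is well-defined on equivalence classes: replacing a representative $a$ of $[a]\in \mathcal{E}_r^{p,\overline{q}}$ by an element of $F_{p+1}A$ pairs to zero with any $b\in \tilde F_{n-p}B$, while replacing $a$ by $\partial c$ with $c\in F_{p-r+1}A$ gives $\pm\langle c,\partial b\rangle$, which vanishes because $\partial b\in \tilde F_{n-p+r}B$ and $(p-r+1)+(n-p+r)>n$. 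Part (2) is a direct computation using skew-adjointness, part (3) is the standard fact that the cohomology pairing is well-defined once the differentials are skew-adjoint, and part (4) follows by induction using Lemma~\ref{easypairing} applied to the finite-dimensional complex $(\mathcal{E}_r,\partial_r)$ paired with $(\tilde{\mathcal{E}}_r,\tilde\partial_r)$.

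For part (5), the plan is to reduce to the $\infty$-th page and then exploit the compatibility hypothesis on the filtrations. Since the filtrations are bounded and the spectral sequences degenerate at some finite stage, the tower
\[\det \mathcal{E}_r \;\cong\; \det \mathcal{E}_{r+1} \;\cong\; \cdots \;\cong\; \det \mathcal{E}_\infty\]
is matched term by term with the corresponding tower for $B$ via the pairings, because each individual square
\[\xymatrix{
\det \mathcal{E}_s \ar[r]^{\cong} \ar[d]^{\cong} & \det \tilde{\mathcal{E}}_s \ar[d]^{\cong} \\
\det \mathcal{E}_{s+1} \ar[r]^{\cong} & \det \tilde{\mathcal{E}}_{s+1}
}\]
commutes by Lemma~\ref{easypairing} (parts (2) and (3) ensure its hypotheses are satisfied, and $n$ odd guarantees the parity condition). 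This reduces the claim to the commutativity of the single square relating $\det \mathcal{E}_\infty \cong \det GH(A)$ and $\det \tilde{\mathcal{E}}_\infty \cong \det GH(B)$ with $\det H(A)$ and $\det H(B)$.

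For this final step I would choose a basis $\{a_i\}$ of $H(A)$ adapted to the filtration, i.e.\ with well-defined filtration degrees $d_i$ such that $F_p H(A)=\mathrm{span}\{a_j:d_j\ge p\}$, and the dual basis $\{b_i\}$ in $H(B)$ under the pairing. The compatibility hypothesis translates directly into the statement that $\{b_i\}$ is adapted to $\tilde F$ with $b_i$ of degree $n-d_i$: the identity $b_i \in \tilde F_{n-d_i+1}\iff b_i(F_{d_i}H(A))=0$ fails because $\langle a_i,b_i\rangle =1$, whereas $b_i\in \tilde F_{n-d_i}\iff b_i$ annihilates $\mathrm{span}\{a_j:d_j>d_i\}$, which holds by duality. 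Under these adapted bases, both horizontal maps of the diagram send the obvious generator $a_1\wedge\cdots\wedge a_N$ (or its class in $\det GH(A)$) to the obvious generator $b_1\wedge\cdots\wedge b_N$ (or its class in $\det GH(B)$), while the vertical canonical isomorphisms from Appendix~\ref{appendix1} match these two bases component by component. The main obstacle I expect is sign tracking in the $\mathbb{Z}_2$-graded setting with the parity-reversal functor $\Pi$ and the odd bidegree $n$: one must verify that all Koszul signs and the signs appearing in the identification $\det H\cong \det GH$ cancel consistently. This is precisely the role played by the hypothesis that $n$ is odd, in direct analogy with the odd-parity assumption in Lemma~\ref{easypairing}.
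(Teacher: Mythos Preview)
Your proposal is correct and follows essentially the same route as the paper: parts (1)--(4) are handled by the same bidegree-and-skew-adjointness checks, and part (5) is reduced to a ladder of squares $\det\mathcal{E}_s\to\det\mathcal{E}_{s+1}$, each of which commutes by Lemma~\ref{easypairing}, leaving only the bottom square linking $\det GH$ to $\det H$.

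The one place where your argument diverges slightly from the paper's is this final square. You propose choosing a filtration-adapted basis $\{a_i\}$ of $H(A)$, taking the dual basis $\{b_i\}$ in $H(B)$, and verifying directly that the compatibility hypothesis forces $b_i$ to have filtration degree $n-d_i$, so that the horizontal and vertical maps visibly agree on generators up to the signs you flag. The paper instead argues more functorially: the pairing gives an isomorphism $\phi\colon H(A)\cong \Pi H(B)^*$ which, by the compatibility hypothesis, carries $F_pH(A)$ to the annihilator $(\tilde F_{n-p+1}H(B))^{\circ}$; one then uses naturality of the canonical isomorphism $\det V\cong\det GV$ with respect to filtered maps, together with the identification $\tilde F_p H(B)^*/\tilde F_{p+1}H(B)^*\cong(\tilde F_{n-p}H(B)/\tilde F_{n-p+1}H(B))^*$, to conclude. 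Your basis computation is more hands-on and makes the role of the compatibility hypothesis very explicit, at the cost of having to track signs by hand; the paper's version hides the signs inside the naturality statement but is correspondingly less transparent about where the hypothesis enters. Either approach is fine, and the sign issues you anticipate are exactly absorbed by the paper's standing convention that all isomorphisms between linear objects are considered up to sign.
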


\begin{proof}
For the first claim we need to show that the pairing from (1.) is well defined. We know that
\[\mathcal{E}^{p,q}_r:= \frac{\{a \in F_p(A^{\overline{p+q}}): \partial(a) \in F_{p+r}(A^{\overline{p+q+1}})\}}{F_{p+1}(A^{\overline{p+q}})+\partial (F_{p-r+1}(A^{\overline{p+q-1}})) }\]
and 
\[\tilde{\mathcal{E}}^{p',q'}_r:= \frac{\{b \in \tilde{F}_{p'}(B^{\overline{p'+q'}}): \partial(b) \in \tilde{F}_{p'+r}(B^{\overline{p'+q'+1}})\}}{\tilde{F}_{p'+1}(B^{\overline{p'+q'}})+\partial (\tilde{F}_{p'-r+1}(B^{\overline{p'+q'-1}}))}.  \]

We can assume that $p'=n-q$ and $\overline{q'}=\overline{q}$ since otherwise the pairing is zero.
In order to prove that the pairing is well defined we need to show that
\[ \langle F_{p+1}(A^{\overline{p+q}})+\partial (F_{p-r+1}(A^{\overline{p+q-1}})),\{b \in \tilde{F}_{p'}(B^{\overline{p'+q'}}): \partial(b) \in \tilde{F}_{p'+r}(B^{\overline{p'+q'+1}})\} \rangle =0 , \]
and that:
\[ \langle \tilde{F}_{p'+1}(B^{\overline{p'+q'}})+\partial (\tilde{F}_{p'-r+1}(B^{\overline{p'+q'-1}})), \{a \in F_p(A^{\overline{p+q}}): \partial(a) \in F_{p+r}(A^{\overline{p+q+1}})\}\rangle =0.\]
By symmetry, it is enough to prove the first equation.
Consider $a \in  F_{p+1}(A^{\overline{p+q}})$ and $b \in \{b \in \tilde{F}_{p'}(B^{\overline{p'+q'}}): \partial(b) \in \tilde{F}_{p'+r}(B^{\overline{p'+q'+1}})\} $. Then
\[\langle a,b \rangle =0\]
because $p+1+p'> n$ and the pairing has bidegree $(-n,\overline{0})$. Next, let us assume that $c \in \partial (F_{p-r+1}(A^{\overline{p+q-1}}))$ 
and $b \in \{b \in \tilde{F}_{p'}(B^{\overline{p'+q'}}): \partial(b) \in \tilde{F}_{p'+r}(B^{\overline{p'+q'+1}})\} $. Hence
\[\langle \partial c , b \rangle=\pm \langle c, \partial b \rangle=0, \]
because $c \in F_{p-r+1}(A^{\overline{p+q}})$, $\partial b \in F_{p'+r}(B^{\overline{p'+q'}})$ and the pairing has bidegree $(-n, \overline{0})$. We conclude that the pairing is
well defined.

The second claim is an immediate consequence of the fact that the pairing between $A$ and $B$ makes the differentials skew-adjoint.
The third claim follows from the explicit formula for the pairing.
The fourth claim is a direct application of Lemma \ref{easypairing}
In order to prove the last claim we will show that the following diagram commutes:
\begin{align*}
\xymatrix{
\det \mathcal{E}_r \ar[r]^{\cong} \ar[d]_{\cong} & \det \tilde{\mathcal{E}}_r \ar[d]^{\cong}\\
\det \mathcal{E}_{r+1} \ar[r]^{\cong} \ar[d]_{\cong} & \det \tilde{\mathcal{E}}_{r+1} \ar[d]^{\cong}\\
\vdots \ar[d]_{\cong} & \vdots \ar[d]^{\cong}\\
\det \mathcal{E}_\infty \ar[r]^{\cong} \ar[d]_{\cong} & \det \tilde{\mathcal{E}}_\infty \ar[d]^{\cong}\\
\det G H(A) \ar[r]^{\cong} \ar[d]_{\cong} & \det G H(B) \ar[d]^{\cong}\\
\det H(A) \ar[r]^{\cong}& \det H(B)
}
\end{align*}

The fact that all but the last diagram commute is a direct application of Lemma \ref{easypairing}. It remains to show that the last diagram commutes. 
From the explicit formula for the pairing on the spectral sequence we know that the pairing at the level of associated graded vector spaces in the 
cohomology induced by the pairing on the spectral sequences is given by
\[ \langle [a],[b]\rangle :=\begin{cases}
\langle a,b\rangle \text{ if } [a] \in F_p(H(A))/F_{p+1}(H(A)) \text{ and } [b]\in \tilde{F}_{n-p}(H(B))/\tilde{F}_{n-p+1}(H(B)) ,\\
0 \text{ otherwise. }
\end{cases}\]

That the last diagram commutes can be seen as follows: first, observe that the pairing yields an isomorphisms
\begin{align*}
 H(A) \cong \Pi H(B)^*
\end{align*}
which maps $F_pH(A)$ to  $\tilde{F}_p H(B)^*:=(\tilde{F}_{n-p+1}H(B))^{\circ}$, i.e. the annihilator of $\tilde{F}_{n-p+1}H(B)$ in $H(B)^*$.
The isomorphism $\phi: \det H(A) \cong \det H(B)$ in the diagram above is induced from this isomorphism.
Moreover, we obtain the following commutative diagram
\begin{align*}
\xymatrix{
\det H(A) \ar[r]^{\det [\phi]} \ar[d] & \det (\Pi H(B)^*) \ar[d] \ar[r]^{=} & \det H(B) \ar[d]\\
\det G(H(A)) \ar[r]^{\det G[\phi]} & \det G(\Pi H(B)^*) \ar[r]^{\cong} & \det G(H(B)),
}
\end{align*}
where the last arrow on the second line comes from the natural identification
\begin{align*}
 \tilde{F}_p H(B)^* / \tilde{F}_{p+1}H(B)^* \cong \left(\tilde{F}_{n-p}H(B) / \tilde{F}_{n-p+1}H(B) \right)^*.
\end{align*}
The composition of $\det G(H(A)) \cong \det G(\Pi H(B)^*) \cong \det G(H(B))$ coincides with the map in the next to last line in the previous diagram.
Hence
\begin{align*}
\xymatrix{
\det G H(A) \ar[r]^{\cong} \ar[d]_{\cong} & \det G H(B) \ar[d]^{\cong}\\
\det H(A) \ar[r]^{\cong}& \det H(B)
}
\end{align*}
commutes as well.

\end{proof}
\thebibliography{10}

\bibitem{AS2}
C. Arias Abad and F. Sch\"atz,
{\em The $\mathsf{A}_\infty$ de Rham theorem and the integration of representations up to homotopy}, submitted for publication.

\bibitem{Block}
J. Block,
{\em Duality and Equivalence of module categories in noncommutative geometry}, 
A celebration of the mathematical legacy of Raoul Bott, 311339, CRM Proc. Lecture Notes,
50, Amer. Math. Soc., Providence, RI, 2010.

\bibitem{BS}
J. Block and A. Smith,
{\em A Riemann-Hilbert correspondence for infinity local systems}, arXiv:0908.2843.

\bibitem{C}
K.T. Chen,
{\em Iterated path integrals}, Bull. Amer. Math. Soc. {\bf 83} (1977), 831--879.

\bibitem{De-Rham}
G. de Rham,
{\em Sur les nouveaux invariants de Reidemeister}, Math. Sb. 1 (1936) 737--743.

\bibitem{Faber}
M. Farber,
{\em Combinatorial invariants computing the Ray-Singer analytic torsion}, Diff. Geometry and its Applications, 6 (1996), 351--366.

\bibitem{Franz}
W. Franz,
{\em \"Uber die Torsion einer \"Uberdeckung}, J. Reine. Angew. Math. 173 (1935) 245--254.

\bibitem{Freed} D.S. Freed,
{\em Reidemeister torsion, spectral sequences, and Brieskorn spheres}, J. Reine Angew. Math. {\bf 429} (1992), 75--89.

\bibitem{Gugenheim}
V. K. A. M. Gugenheim,
{\em On Chen's iterated integrals},
Illinois J. Math. Volume 21, Issue 3 (1977), 703--715. 

\bibitem{I}
K. Igusa,
{\em Iterated integrals of superconnections}, arXiv:0912.0249.

\bibitem{MathaiWu}
V. Mathai, S. Wu,
{\em Analytic torsion for twisted de Rham complexes}, Journal of Differential Geometry, (to appear).

\bibitem{MathaiWu2}
V. Mathai, S. Wu, 
{\em Analytic torsion of $\mathbb{Z}_2$-graded elliptic complexes}, 
Contemporary Mathematics, {\bf 546} (2011) 199--212.

\bibitem{Maumary}
S. Maumary,
{\em Contributions \`a la th\'eorie du type simple d'homotopie}, Comment. Math. Helv. {\bf 44} (1978), 233--305.

\bibitem{Milnor}
J. Milnor,
{\em Whitehead torsion}, Bull. Amer. Math. Soc. Volume {\bf 72}, Number 3 (1966), 358--426.

\bibitem{Reidemeister}
K. Reidemeister,
{\em Homotopieringe und Linsenr\"aume}, Hamburger Abhandl. 11 (1935), 102--109.

\bibitem{Whitehead}
J. H. C. Whitehead,
{\em On $C^{1}$-complexes}, Annals of Math. {\bf 41} (1940), 809--824.

\end{document}